\newtheorem{thm}{Theorem}
\newtheorem{lem}[thm]{Lemma}
\newtheorem{rem}[thm]{Remark}
\newtheorem{defn}[thm]{Definition}
\newtheorem{con}[thm]{Conjecture}
\begin{document}

\begin{center}
\begin{Large}
Tori Can't Collapse to an Interval 
\end{Large}

\begin{normalsize}
Sergio Zamora

sxz38@psu.edu
\end{normalsize}

\end{center}

%----------Author 1
%\author[Sergio Zamora]{Sergio Zamora}

%\address{%
%Penn State University\\
%018 McAllister Bldg\\
%University Park, PA\\
%16801, USA }

%\email{sxz38@psu.edu}

%----------classification, keywords, date
%\subjclass{53C23}

%\keywords{Alexandrov geometry, collapse, sectional curvature bounds}

%\date{July 28, 2020}
%----------additions
%%% ----------------------------------------------------------------------

\begin{abstract}
Here we prove that under a lower sectional curvature bound, a sequence of Riemannian manifolds diffeomorphic to the standard $m$-dimensional torus cannot converge in the Gromov--Hausdorff sense to a closed interval. 

The proof is done by contradiction by analyzing suitable covers of a contradicting sequence, obtained from the Burago--Gromov--Perelman generalization of the Yamaguchi fibration theorem.
\end{abstract}

%%% ----------------------------------------------------------------------
%\maketitle
%%% ----------------------------------------------------------------------
%\tableofcontents
\section{Introduction}

The study of Riemannian manifolds with sectional curvature bounded below naturally leads to the study of Alexandrov spaces in part due to the following well known results.

\begin{thm}\label{Compactness}
(\cite{PG}, Theorem 5.3). Let $X_n$ be a sequence of closed $m$-dimensional Riemannian manifolds with sectional curvature $\geq c$ and diameter $\leq D$. Then there is a subsequence that converges in the Gromov--Hausdorff sense to a compact space $X$. 
\end{thm}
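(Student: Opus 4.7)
The plan is to deduce the result from Gromov's general precompactness criterion combined with a packing estimate coming from volume comparison. Gromov's criterion states that a family of compact metric spaces of uniformly bounded diameter is relatively compact in the Gromov--Hausdorff topology if and only if there is a function $N \colon (0,\infty) \to \N$ such that every space in the family admits an $\epsilon$-net of cardinality at most $N(\epsilon)$ for every $\epsilon > 0$. So the task reduces to producing such a uniform net bound for closed $m$-manifolds with $\operatorname{sec} \geq c$ and $\operatorname{diam} \leq D$, and then running the standard diagonal extraction.

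The geometric content is the net bound, which I would obtain from the Bishop--Gromov volume comparison inequality. Since a lower sectional curvature bound implies a lower Ricci bound, for any $p \in X_n$ the ratio $\mathrm{vol}(B_r(p))/V_c^m(r)$ is non-increasing in $r$, where $V_c^m(r)$ denotes the volume of a ball of radius $r$ in the simply connected $m$-dimensional space form of curvature $c$. Take a maximal collection $\{B_{\epsilon/2}(p_i)\}_{i=1}^k$ of pairwise disjoint balls in $X_n$; their centers automatically form an $\epsilon$-net, and Bishop--Gromov combined with $\operatorname{diam}(X_n) \leq D$ gives
\[ \mathrm{vol}(B_{\epsilon/2}(p_i)) \geq \mathrm{vol}(X_n)\, \frac{V_c^m(\epsilon/2)}{V_c^m(D)}. \]
Summing over $i$ and dividing by $\mathrm{vol}(X_n)$ yields $k \leq V_c^m(D)/V_c^m(\epsilon/2) =: N(\epsilon)$, a quantity depending only on $c$, $m$, $D$, and $\epsilon$.

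With the packing bound in hand, I would extract a convergent subsequence by the standard diagonal procedure: for each $k \in \N$ pick a $(1/k)$-net $S_n^k \subseteq X_n$ of cardinality at most $N(1/k)$; pass to a subsequence so that $|S_n^k|$ is constant in $n$ and all pairwise distances between points of $S_n^k$ converge as $n \to \infty$; then the limit pseudo-metric on the increasing union of the abstract index sets descends to a metric on its quotient, and the completion $X$ of that quotient is the Gromov--Hausdorff limit. The main obstacle is not really geometric — the packing bound is a one-line application of Bishop--Gromov — but rather the bookkeeping needed to verify that the abstract metric completion $X$ is genuinely the Gromov--Hausdorff limit of the chosen subsequence, which requires a standard but careful $\epsilon/3$ argument comparing the nets $S_n^k$ to their abstract images in $X$.
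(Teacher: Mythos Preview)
Your argument is correct and is the standard proof of Gromov's precompactness theorem: Bishop--Gromov gives a uniform packing bound depending only on $m,c,D,\epsilon$, and then Gromov's criterion (or the diagonal extraction you sketch) produces a convergent subsequence. Note, however, that the paper does not supply its own proof of this statement; it is quoted as a known preliminary with a reference to \cite{PG}, Theorem 5.3, so there is nothing in the paper to compare your approach against.
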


\begin{thm} \label{Alex}
(\cite{BGP}, Section 8). Let $X_n$ be a sequence of closed $m$-dimensional Riemannian manifolds with sectional curvature $\geq c$. If the sequence $X_n$ converges in the Gromov--Hausdorff sense to a compact space $X$, then $X$ is an $\ell$-dimensional Alexandrov space of curvature $\geq c$ with $\ell \leq m$. 
\end{thm}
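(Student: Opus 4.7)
My plan is to handle the curvature condition and the dimension bound separately. For the curvature condition, I would rely on Toponogov's theorem, which says that a Riemannian manifold has sectional curvature $\geq c$ if and only if, for every geodesic triangle $\triangle pqr$ and every point $s$ on the segment $\overline{qr}$, the distance $|ps|$ is at least the corresponding distance in a comparison triangle in the model 2-dimensional space of constant curvature $c$. This is a purely metric condition, expressed through the distance function and a continuous model-space functional. It therefore passes to Gromov--Hausdorff limits: given any quadruple of points in $X$, approximate them by quadruples in $X_n$ and apply the inequality, then let $n\to\infty$. Together with the fact that $X$ is a length space (length-space structure is inherited under GH convergence), this yields that $X$ satisfies the BGP definition of an Alexandrov space of curvature $\geq c$.

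For the dimension estimate $\ell \leq m$, I would rely on a packing argument. Iterated Toponogov comparison (or Bishop--Gromov volume comparison) yields a uniform upper bound $N(m, c, r, \varepsilon)$ on the maximum cardinality of an $\varepsilon$-separated subset of a metric ball of radius $r$ in any $m$-dimensional Riemannian manifold with sectional curvature $\geq c$. Since a $\delta$-GH approximation $X_n \to X$ pulls an $\varepsilon$-separated subset of a ball of $X$ back to an $(\varepsilon - 2\delta)$-separated subset of a ball of $X_n$, the packing bound descends to $X$. Letting $\varepsilon \to 0$ gives $\dim_{\mathcal H} X \leq m$.

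The main obstacle is to upgrade this Hausdorff bound to the intrinsic Alexandrov dimension $\ell$ appearing in the statement, since a priori the integer $\ell$ is a different invariant. For this I would invoke the structural theory developed in Section 8 of BGP: an Alexandrov space admits a well-defined integer dimension $\ell$, characterized as the largest $k$ for which some point carries a $(k, \delta)$-strainer; near such a strainer the space admits bi-Lipschitz coordinates in $\R^\ell$; and $\ell$ equals the Hausdorff dimension. Combined with the packing estimate this forces $\ell \leq m$. The genuine technical content lies in this dimension theory for Alexandrov spaces, rather than in the passage of the curvature inequality to the limit, which is essentially a continuity argument.
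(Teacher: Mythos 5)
Theorem \ref{Alex} is not proved in the paper at all; it is quoted from \cite{BGP}, Section 8, so there is no internal argument to compare yours against. Your outline is the standard proof of this fact and is essentially correct: the lower curvature bound passes to the Gromov--Hausdorff limit because it can be phrased as a closed, purely metric condition, the length-space property survives the limit, and the dimension bound comes from a packing estimate via Bishop--Gromov together with BGP's dimension theory identifying the integer Alexandrov dimension with the Hausdorff dimension. One small point of hygiene: the version of Toponogov you state involves a point $s$ lying on a geodesic segment $\overline{qr}$, and that configuration is not preserved by a Gromov--Hausdorff approximation (the approximating point in $X_n$ need not lie on any segment), so the clean way to transfer the curvature bound is the equivalent four-point (quadruple) comparison condition --- precisely the form the paper records later as Lemma \ref{AlexDBGP} --- which involves only distances among four points and is manifestly closed under GH convergence; alternatively one argues in the limit space directly, using Arzel\`a--Ascoli to produce geodesics as limits of geodesics. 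Finally, you are right that the real content in the dimension statement is the BGP structure theory (strainers, bi-Lipschitz charts, equality of rough/Hausdorff dimension); since the theorem itself is a citation, invoking that theory rather than reproving it is appropriate.
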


In this situation, results by Perelman and Yamaguchi show that in many cases, the topology of the limit is closely tied to the topology of the sequence.

\begin{thm} \label{PerelStab}
\cite{Per}, \cite{KaPer}. Let $X_n$ be a sequence of closed $m$-dimensional Riemannian manifolds with sectional curvature $\geq c$. If the sequence $X_n$ converges in the Gromov--Hausdorff sense to a compact $m$-dimensional Alexandrov space of curvature $\geq c$, then there is a sequence $f_n : X_n \to X$ of Gromov--Hausdorff approximations such that $f_n$ is a homeomorphism for large enough $n$.
\end{thm}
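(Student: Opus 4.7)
The plan is to combine local strainer charts with Perelman's patching machinery. Since $X$ is an $m$-dimensional Alexandrov space, the Burago--Gromov--Perelman theory behind Theorem~\ref{Alex} provides, around any $p \in X$, an $(m,\delta)$-strainer $\{(a_i,b_i)\}_{i=1}^m$ whose associated distance-function map $F_p = (d(a_1,\cdot),\dots,d(a_m,\cdot))$ is bi-Lipschitz from a neighborhood $U_p$ onto an open set in $\R^m$. Because the hypothesis excludes collapse, I can lift the strainer points through the Gromov--Hausdorff approximations to obtain strainers in $X_n$ for large $n$, and hence corresponding bi-Lipschitz charts $F_p^n : U_p^n \to \R^m$ whose images converge to that of $F_p$.

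In the non-collapsed setting, the degenerate case of Perelman's fibration theorem says that such a strainer map is in fact a local homeomorphism, so the composition $\phi_p^n := (F_p^n)^{-1}\circ F_p$ is a homeomorphism from a small open set in $X$ onto an open set in $X_n$, and it is an $\varepsilon_n$-Gromov--Hausdorff approximation on its domain with $\varepsilon_n \to 0$. Taking a finite cover $\{U_i\}_{i=1}^N$ of $X$, I obtain local candidates $\phi_i^n$ that I would attempt to assemble into a single global map $f_n$ (inverse to a GH approximation), using the gradient flow of a strictly concave function adapted to the cover together with a partition of unity pulled back through the strainer charts.

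The gluing is the genuine difficulty, and I expect essentially all the work of an honest proof to sit there. On overlaps $U_i \cap U_j$ the maps $\phi_i^n$ and $\phi_j^n$ agree only up to a small Gromov--Hausdorff error, so naive averaging in the ambient metric does not give a well-defined bijection, and simply declaring $f_n$ to be $\phi_i^n$ on one piece of a partition risks destroying injectivity along the seams. Upgrading topologically consistent local approximations to a global homeomorphism requires the deeper analytic machinery Perelman developed---strictly concave functions, admissible maps, and their associated gradient flows on Alexandrov spaces---which produces canonical deformations with controlled injectivity. Once that machinery is in hand, the local-to-global step is comparatively routine; constructing it is essentially the content of \cite{Per} and \cite{KaPer}, and I would not expect to shortcut it.
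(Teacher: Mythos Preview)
The paper does not prove this theorem at all: it is stated with the citations \cite{Per} and \cite{KaPer} and then used as a black box. So there is no ``paper's own proof'' to compare against; the paper simply imports Perelman's stability theorem from the literature.

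Your sketch is a fair high-level outline of what actually happens in \cite{Per} and \cite{KaPer}: strainer maps give local bi-Lipschitz coordinates on $X$, these lift to $X_n$ in the non-collapsed regime, and the substantial content is the gluing of local approximations into a global homeomorphism via Perelman's admissible maps and gradient-like flows. You are also honest that the gluing step is where essentially all the work lies and that you are not reproducing it. That is the right assessment---but it also means your proposal is not a proof, only a roadmap pointing back to the same references the paper cites. If the goal were to supply an independent argument, you would need to carry out (or at least summarize in detail) the construction of strictly concave functions near extremal subsets and the deformation/isotopy argument that turns $\varepsilon$-close local homeomorphisms into a single global one; short of that, citing \cite{Per} and \cite{KaPer} as the paper does is the appropriate move.
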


\begin{thm} \label{Fibration}
(\cite{Yam}, Main Theorem). Let $X_n$ be a sequence of closed $m$-dimensional Riemannian manifolds with sectional curvature $\geq c$. If the sequence converges in the Gromov--Hausdorff sense to a closed Riemannian manifold $X$, then there is a sequence of Gromov--Hausdorff approximations $f_n : X_n \to X$ such that $f_n$ is a locally trivial fibration for large enough $n$.
\end{thm}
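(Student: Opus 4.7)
The plan is to produce $f_n$ locally from distance functions to carefully chosen reference points in $X$, check that these local maps behave like Riemannian submersions, and patch them into a global locally trivial fibration. Set $m' := \dim X$ (so $m' \le m$) and fix $p \in X$. Because $X$ is a closed Riemannian manifold, I can choose points $q_1,\dots,q_{m'} \in X$ away from $p$ so that the gradients $-\nabla_p d(q_i,\cdot)$ form an almost orthonormal basis of $T_p X$; then $\Phi(y) := (d(q_1,y),\dots,d(q_{m'},y))$ is a smooth chart on a ball $B(p,r_p)$. Using the Gromov--Hausdorff approximations, pick $q_{i,n} \in X_n$ with $q_{i,n} \to q_i$ and set $\Phi_n(x) := (d(q_{1,n},x),\dots,d(q_{m',n},x))$. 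By continuity of distance, $\Phi_n$ is $C^0$-close to $\Phi$ composed with the approximation on the preimage of $B(p,r_p)$.

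Next I would show that $\Phi_n$ is a submersion onto $\Phi(B(p,r_p))$ with controlled fibers. Rescaling $X_n$ at any point $x$ close to $f_n^{-1}(p)$ and applying Theorem \ref{Alex}, the tangent cone of $X_n$ at $x$ is close to $T_{f_n(x)}X$, so the $q_{i,n}$ form an $(m',\varepsilon)$-strainer at $x$ in the sense of BGP. Standard strainer theory then shows that $\Phi_n$ is an open $\varepsilon$-submersion whose fibers are topological $(m-m')$-manifolds. The hardest step is upgrading \emph{$\varepsilon$-submersion} to \emph{locally trivial fibration}: following Yamaguchi, I would apply Perelman's gradient-flow machinery to semi-concave smoothings of the $d(q_{i,n},\cdot)$ in order to deform $\Phi_n$ into a genuine submersion over a slightly smaller ball, with fibers carrying a local product structure and pairwise diffeomorphic to one another.

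To globalize, I would cover $X$ by finitely many such charts. On each overlap the local models are $C^0$-close and yield fibrations with the same fiber type, so a partition-of-unity interpolation in the base, combined with openness of the fibration property under small perturbation, glues them into a single locally trivial map $f_n : X_n \to X$; the fact that $f_n$ is a Gromov--Hausdorff approximation is automatic from the construction. The main obstacle is precisely the submersion-to-fibration step: approximate nondegeneracy of $d\Phi_n$ follows relatively directly from comparison geometry, but transferring this into an honest bundle of a fixed topological type, and ensuring compatibility across overlapping charts, is the heart of Yamaguchi's argument and requires careful use of quasi-geodesic and gradient-flow techniques on semiconcave distance functions.
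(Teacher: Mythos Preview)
The paper does not prove this theorem at all: Theorem~\ref{Fibration} is quoted as Yamaguchi's Main Theorem with a citation to \cite{Yam}, and is used only as a black box (indeed, the paper never even invokes it directly---what is actually used later is the singular version, Theorem~\ref{SFibration}). So there is no ``paper's own proof'' to compare your proposal against.

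For what it is worth, your outline is broadly in the spirit of Yamaguchi's original argument: build local maps from distance functions to strainer points, use comparison geometry to see that these are almost Riemannian submersions, and then glue. Your sketch is honest about where the real work lies---upgrading an $\varepsilon$-submersion to a genuine locally trivial fibration and making the local pieces compatible on overlaps---but it does not actually carry out those steps, and the appeal to ``Perelman's gradient-flow machinery'' is anachronistic for Yamaguchi's 1991 paper, which instead uses a center-of-mass smoothing and embedding argument. In any case, since the present paper treats this result as background, no proof is expected here.
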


Even with these two powerful theorems, collapsing under a lower curvature bound is still far from being well understood, specially when the limit space has singularities or boundary. At the local level, Vitali Kapovitch has successfully studied the behaviour of collapse.

\begin{thm}
\cite{Kap}. Let $X_n$ be a sequence of closed $m$-dimensional Riemannian manifolds with sectional curvature $\geq c$ that converges in the Gromov--Hausdorff sense to a compact Alexandrov space $X$ of curvature $\geq c$. Then for any $x_0 \in X$, any sequence of Gromov--Hausdorff approximations $f_n : X_n \to X$, and any sequence of points $x_n \in f_n^{-1}(x_0)$, there is an $r_0 (x_0) > 0$ such that for large enough $n$, the closed ball $B_{r_0}(x_n) $ is a manifold with boundary, simply homotopic to a finite CW-complex of dimension $\leq m-\ell$. 
\end{thm}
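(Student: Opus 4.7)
The approach combines the conical local model of Alexandrov spaces at $x_0$ with Perelman's critical point theory for semiconcave distance functions and the BGP generalization of Yamaguchi's fibration theorem. The broad idea is that small balls in $X$ are approximately truncated cones over the space of directions $\Sigma_{x_0}$, so their preimages in $X_n$ are controlled by a fibration over such a cone whose generic fiber is an $(m-\ell)$-dimensional collapsing space.

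First I would fix the model downstairs. For $r_0$ smaller than both the convexity radius of the tangent cone $C_{x_0} X$ and the distance to any lower-dimensional singular stratum, the ball $B_{r_0}(x_0) \subset X$ is, after rescaling by $1/r_0$, Gromov--Hausdorff close to the unit ball in $C_{x_0} X$. In particular $d(x_0, \cdot)$ has no Perelman-critical points in $B_{r_0}(x_0) \setminus \{x_0\}$. Stability of criticality under collapse lifts this property upstairs: for large $n$, the function $d(x_n, \cdot)$ on $X_n$ has no critical points in $B_{r_0}(x_n) \setminus B_{\varepsilon}(x_n)$ for some small $\varepsilon > 0$. Choosing $r_0$ as a regular value and invoking the Grove--Shiohama isotopy lemma then shows that $B_{r_0}(x_n)$ is a topological manifold with boundary, obtained from $B_{\varepsilon}(x_n)$ by attaching a collar.

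Second, I would extract the CW-complex bound using the fibration theorem on the annulus. Over the shell $B_{r_0}(x_0) \setminus B_{\varepsilon}(x_0)$, which is nearly a truncated cone on $\Sigma_{x_0}$, the approximations $f_n$ restrict to approximate fibrations whose generic fiber $F$ is a closed $(m-\ell)$-dimensional manifold. Since the cone is contractible, one expects the total ball $B_{r_0}(x_n)$ to deformation retract onto the fiber over $x_0$, yielding a homotopy equivalence with a space of dimension $\leq m-\ell$ that can be triangulated as a finite CW complex of that dimension. To make this precise I would induct on the codimension $m-\ell$: the base case $m = \ell$ is Theorem \ref{PerelStab}, which gives an outright homeomorphism with a small ball in an Alexandrov space; the inductive step treats the fiber itself as a collapsing space of strictly smaller codimension to which the theorem applies.

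The main obstacle is the singular structure of $\Sigma_{x_0}$: because $X$ is only an Alexandrov space, the space of directions at $x_0$ may carry its own boundary and lower-dimensional strata, so the ``fibration'' promised by BGP is really a stratified family of local fibrations that must be glued coherently across the singular set of $\Sigma_{x_0}$. Turning the approximate fiber into a genuine manifold, and upgrading the resulting homotopy equivalence with a low-dimensional CW complex into a \emph{simple} homotopy equivalence rather than a bare one, is the technical heart of the proof; it presumably rests on Perelman's gluing of gradient flows together with a careful cellular refinement of the strata compatible with Whitehead torsion.
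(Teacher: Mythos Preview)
The paper does not prove this theorem at all: it is quoted from \cite{Kap} as a background result and no argument is supplied, so there is no ``paper's own proof'' against which to compare your sketch. Your outline is a plausible high-level summary of the ideas that actually go into Kapovitch's argument (conical local model, lifting regularity of the distance function, stratified fibration over the space of directions, induction on codimension), but none of that appears in the present paper, which simply invokes the statement.

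If the intent was to supply a proof where the paper gives none, be aware that what you have written is a roadmap rather than a proof: the hard content---gluing the BGP fibrations across strata of $\Sigma_{x_0}$, controlling the homotopy type of the fiber uniformly, and upgrading to a \emph{simple} homotopy equivalence---is acknowledged but not carried out, and each of these steps is a substantial piece of \cite{Kap}. For the purposes of this paper, however, the theorem is used only as motivation in the introduction and plays no role in the proof of Theorem~\ref{main}, so a citation is entirely appropriate and no proof is expected.
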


At the global level, Mikhail Katz recently proved that the $2$-dimensional torus cannot collapse to a segment.

\begin{thm}\label{Katz}
\cite{Katz}. Let $g_n$ be a sequence of Riemannian metrics of sectional curvature $\geq -1$ in the $2$-dimensional torus $M$. Then it cannot happen that the sequence $(M, g_n)$ converges in the Gromov--Hausdorff sense to an interval $[0,L]$.
\end{thm}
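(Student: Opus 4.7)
My plan is to argue by contradiction. Suppose some sequence $(M, g_n)$ of Riemannian metrics of sectional curvature $\geq -1$ on the $2$-torus converges in Gromov--Hausdorff sense to $[0,L]$, and let $f_n : M \to [0,L]$ be $\delta_n$-Gromov--Hausdorff approximations with $\delta_n \to 0$. I want to decompose $M$ topologically (for large $n$) into a cylinder over the interior together with two ``caps'' over the endpoints, and then show that no such assembly is homeomorphic to $T^2$. For the interior: every point of $(0, L)$ is a manifold (interior) point of the Alexandrov limit, so for any fixed $\epsilon \in (0, L/2)$ a local form of Theorem~\ref{Fibration} applies on a neighborhood of $[\epsilon, L-\epsilon]$, yielding for large $n$ a locally trivial $S^1$-fibration $f_n^{-1}([\epsilon, L-\epsilon]) \to [\epsilon, L-\epsilon]$. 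Since the base is contractible the bundle is trivial, so $f_n^{-1}([\epsilon, L-\epsilon])$ is homeomorphic to a cylinder $S^1 \times [\epsilon, L-\epsilon]$.

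\emph{Boundary caps.} The complementary pieces $C^0_n := f_n^{-1}([0, \epsilon])$ and $C^L_n := f_n^{-1}([L-\epsilon, L])$ are then compact $2$-manifolds, each with a single boundary circle (matching the ends of the cylinder). To identify their topology I would invoke the Burago--Gromov--Perelman extension of the fibration theorem (the one promised by the abstract) near boundary points of the Alexandrov limit. Since the tangent cone at an endpoint of $[0, L]$ is the ray $[0, \infty)$, whose space of directions is a single point, the circle fiber can only degenerate at the endpoint in two topologically distinct ways: it can collapse isotropically to a point, producing a cone on $S^1$ (that is, a $2$-disk $D^2$); or it can fold under a $\mathbb{Z}/2$-action, producing the mapping cylinder of the double cover $S^1 \to S^1$ (that is, a M\"obius band). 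Therefore each cap is homeomorphic either to $D^2$ or to a M\"obius band.

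Assembling the pieces, $M$ is homeomorphic to exactly one of: $S^2$ (disk on both sides), $\mathbb{RP}^2$ (one disk, one M\"obius band), or the Klein bottle (two M\"obius bands). None of these is $T^2$, yielding the desired contradiction. The main obstacle is the second step: Theorem~\ref{Fibration} as stated applies only to limits that are manifolds without boundary, whereas $\{0, L\}$ are genuine Alexandrov boundary points. The dichotomy disk versus M\"obius band is the essential content of the BGP extension in this setting, and in particular must rule out exotic candidates for a cap such as a once-punctured torus, which would be topologically consistent with having a single boundary circle but would restore the nontrivial $\pi_1$ needed for $M \cong T^2$. Once this geometric identification is secured, the surface topology of the last step is elementary.
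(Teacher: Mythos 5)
Your reduction has a genuine gap exactly where you locate it, and unfortunately that gap is the whole theorem. The fibration theorem you want to invoke over the endpoints does not exist in the form you need: the Burago--Gromov--Perelman version (Theorem \ref{SFibration}) only gives a fibration over compact subsets of the set $U_\delta(X)$ of $\delta$-regular points, and the endpoints of $[0,L]$ are not $\delta$-regular (their space of directions is a single point, far from $\mathbb{S}^0$), so no statement in this circle of ideas tells you anything about the topology of $f_n^{-1}([0,\epsilon])$. Your heuristic that ``the circle fiber can only degenerate isotropically or by a $\mathbb{Z}/2$-fold'' is precisely the assertion that the cap cannot be a once-punctured torus; but ruling that out is equivalent to the theorem itself (a punctured-torus cap glued to a cylinder and a disk is exactly how $T^2$ would collapse to an interval if it could). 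Kapovitch's local theorem quoted in the introduction does not close the gap either: it only says a small ball over the endpoint is simply homotopy equivalent to a CW-complex of dimension $\leq 1$, and a punctured torus is homotopy equivalent to a wedge of two circles, hence not excluded. So the proposal is circular at its key step.

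For comparison, the paper's argument deliberately avoids classifying the caps. It cuts along a fiber circle to write $X_n = S_1 \# S_2$ and uses only that $T^2$ is prime, so \emph{one} side, say $Y_n = f_n^{-1}([0,L-\epsilon])$, is a disk; nothing is claimed about the other side. The contradiction then comes from geometry rather than topology of the bad cap: collapse forces $\mathrm{Vol}(X_n)\to 0$, so Gromov's systolic inequality (Theorem \ref{Sys}) produces noncontractible loops of length $\leq L/C$; such a loop must leave the contractible $Y_n$, its class has infinite order in $\mathbb{Z}^2$, and its powers give $\geq C/3$ deck translates of a fixed ball $B_{L-3\epsilon}$ (lifted isometrically from $Y_n$) packed disjointly inside a ball of radius $3L$ in the cover, contradicting Bishop--Gromov (Theorem \ref{BGI}) once $C$ is large. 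If you want to salvage your approach, you would need an independent proof of the disk-or-M\"obius-band dichotomy at an endpoint (e.g.\ via rescaling at the endpoint and analyzing the noncompact nonnegatively curved limit, as in Shioya--Yamaguchi-type arguments), which is substantially harder than the volume-comparison route above.
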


The goal of this note is to prove the following generalization of Theorem \ref{Katz}.

\begin{thm}\label{main}
Let $g_n$ be a sequence of Riemannian metrics of sectional curvature $\geq -1$ in the $m$-dimensional torus $M$. Then it cannot happen that the sequence $(M, g_n)$ converges in the Gromov--Hausdorff sense to an interval $[0,L]$.
\end{thm}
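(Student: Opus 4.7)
My plan is to argue by contradiction. Suppose a sequence $(T^m, g_n)$ of $m$-tori with sectional curvature $\geq -1$ Gromov--Hausdorff converges to $[0, L]$, and let $p_n : T^m \to [0, L]$ be Gromov--Hausdorff approximations.

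The first step is to exploit the regular structure of the limit. By the Burago--Gromov--Perelman generalization of Yamaguchi's fibration theorem, applied over the regular interior $(0, L)$, for each small $\epsilon > 0$ and each sufficiently large $n$ the preimage $p_n^{-1}((\epsilon, L-\epsilon))$ is the total space of a locally trivial fiber bundle over the contractible base with closed $(m-1)$-dimensional fiber $F$; triviality of the bundle gives $p_n^{-1}((\epsilon, L-\epsilon)) \cong F \times (\epsilon, L-\epsilon)$. By Kapovitch's local theorem, the end caps $A_0 := p_n^{-1}([0, \epsilon])$ and $A_L := p_n^{-1}([L-\epsilon, L])$ are compact $m$-manifolds with boundary $F$ that are simple-homotopy equivalent to CW complexes of dimension at most $m-1$, so $T^m \cong A_0 \cup_F A_L$ up to diffeomorphism.

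The second step is to use the collapse of the fiber $F$. With the induced metric $g_n|_F$, the fiber is itself a closed $(m-1)$-manifold with sectional curvature $\geq -1$ whose intrinsic diameter tends to $0$, hence a collapsing sequence under a lower sectional curvature bound. By iterative application of the BGP fibration theorem to $F$---essentially the Fukaya--Yamaguchi-style control on the fundamental group of a collapsing sequence---one shows that $\pi_1(F)$ contains a nilpotent subgroup of finite index with torsion-free rank at most $\dim F = m - 1$. Consequently, the image $H$ of $\pi_1(F) \to \pi_1(T^m) = \mathbb{Z}^m$ has rank at most $m-1$, so $H$ is a proper subgroup of $\mathbb{Z}^m$ and the quotient $\mathbb{Z}^m / H$ has rank $\geq 1$.

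The third step is to pass to the cover $\widehat M_n$ of $T^m$ corresponding to $H$, with deck group $\mathbb{Z}^m / H$ of positive rank. The fibration-plus-cap decomposition lifts to a tree-like tiling of $\widehat M_n$, on which the deck group acts by isometries. A non-trivial element $\sigma \in \mathbb{Z}^m \setminus H$ cannot be represented by any loop in $F$, so any representative of $\sigma$ in $T^m$ must traverse at least one end cap; equivalently, $\sigma$ has positive ``crossing distance'' in the dual tree of the tiling of $\widehat M_n$. Combining an equivariant BGP analysis of $\widehat M_n$ with a doubling argument on one of the end caps (producing a closed manifold to which the rank-control theorem applies) shows that this crossing distance is bounded below uniformly in $n$, yet the equivariant analysis of the collapse simultaneously forces $\ell_n(\sigma) \to 0$, a contradiction. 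The main obstacle is the rank-control argument of Step 2---the iteration of BGP on the collapsing fiber requires careful tracking of $\pi_1$ through a tower of fibrations with passage to suitable covers at each stage---and, in Step 3, the quantification of the crossing-distance lower bound via an equivariant BGP analysis of the non-compact cover or, equivalently, a doubling-type argument which brings the end caps within the scope of the closed-manifold rank theorem.
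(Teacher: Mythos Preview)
Your Step 2 contains a salvageable error: the induced metric on the fiber $F$ does \emph{not} inherit the lower sectional curvature bound (the Gauss equation involves an uncontrolled second fundamental form), so you cannot iterate BGP on $F$ as written. The paper sidesteps this entirely: BGP already outputs $F$ as an ANNCGS$(m-1)$, Yamaguchi's pinching theorem then gives $b_1(F)\le m-1$, and since $\pi_1(T^m)=\mathbb Z^m$ is abelian the image $H$ of $\pi_1(F)$ has rank $\le m-1$ with no iteration needed. The genuine gap is Step 3. Your contradiction---a uniform lower bound on ``crossing distance'' versus $\ell_n(\sigma)\to 0$ for some $\sigma\notin H$---is unjustified: you give no reason any class outside $H$ should have representatives of length tending to $0$ (the systolic inequality only produces \emph{some} short noncontractible loop, which may well lie in $H$). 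Nor is the cover $\widehat M_n$ ``tree-like'' in general, since the end caps need not lift to disjoint copies: their $\pi_1$-images in $\mathbb Z^m$ can strictly contain $H$, and deciding when they do is precisely the heart of the problem.

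The paper's endgame is different and concrete. Taking a basepoint $p_n\in f_n^{-1}(L/2)$, Gromov's short-generators lemma produces generators of $\pi_1(X_n,p_n)$ of length $\le L+10\varepsilon$---bounded, not tending to $0$. Any such generator not already in $i_*\pi_1(F)$ is too short to reach both ends, so it is either Type I (visits $f_n^{-1}([0,\varepsilon])$) or Type II (visits $f_n^{-1}([L-\varepsilon,L])$). If one type is absent, the corresponding end cap \emph{does} lift to disjoint copies in the cover, and a Bishop--Gromov packing argument (using that $\Gamma_n=\mathbb Z^m/H$ has an element of infinite order) yields the contradiction. If two Type I loops are inequivalent modulo $H$, lifting them to the cover produces a four-point configuration violating the Alexandrov comparison condition. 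Otherwise $\Gamma_n$ is generated by exactly one Type I and one Type II class, each of which equals its own inverse (the inverse of a Type I loop is again Type I), so $\Gamma_n$ would be generated by two elements of order $2$---impossible for an abelian group of positive rank.
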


\begin{rem}
Let $\Phi_n$ be the group of isometries of $\mathbb{C}$ generated by $z\to z+2i$ and $z \to \overline{z}+\frac{1}{n}$. The quotient $W_n = \mathbb{C}/\Phi_n$ is a flat Klein bottle and the sequence $W_n$ converges in the Gromov--Hausdorff sense to $[0,1]$ (see Figure \ref{Klein}), so Theorem \ref{main} is false if one replaces the $m$-dimensional torus by the Klein bottle.   
\end{rem}

\begin{figure}
\centering
\includegraphics[scale=0.7]{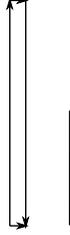}
\caption{Flat Klein bottles can converge to an interval}\label{Klein}
\end{figure}

Theorem \ref{main} represents a step towards the following conjecture. Theorem \ref{PerelStab} implies the case $\ell = m$, and Theorem \ref{main} the case $\ell =1$.

\begin{con}\label{tortotor}
Let $g_n$ be a sequence of Riemannian metrics of sectional curvature $\geq -1$ in the $m$-dimensional torus $M$ such that the sequence $(M, g_n)$ converges to a compact $\ell$-dimensional Alexandrov space $X$. Then $X$ is homeomorphic to an $\ell$-dimensional torus.
\end{con}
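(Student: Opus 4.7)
The plan is to prove the conjecture by first establishing that the limit $X$ must be a closed Riemannian manifold (no boundary, no Alexandrov singularities) and then using Yamaguchi's fibration theorem (Theorem \ref{Fibration}) to extract a torus bundle $T^{m-\ell} \to T^m \to X$, from which $X \cong T^\ell$ follows by topological rigidity.

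The first step is to rule out $\partial X \neq \emptyset$. This generalizes the cover analysis used in this paper for the $\ell = 1$ case (Theorem \ref{main}). Since every finite-index subgroup of $\pi_1(T^m) = \Z^m$ is again $\Z^m$, all finite covers of $M = T^m$ are tori and continue to satisfy the curvature hypothesis. Choose a $\Z/2$-cover adapted to the boundary so that the corresponding sequence of covers Gromov-Hausdorff converges (up to subsequence) to the topological double of $X$ across $\partial X$, on which the deck involution acts as the reflection along $\partial X$. A reflection with fixed set of codimension one is incompatible with the abelian, torsion-free $\Z^m$-action inherited from the base; formalizing this — likely by induction on $\ell$ with Theorem \ref{main} as the base case, combined with the Kapovitch local theorem applied near $\partial X$ — should force $\partial X = \emptyset$.

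Second, I would rule out interior singular points. Lift to the universal covers $(\R^m, \tilde g_n)$ and pass to an equivariant Gromov-Hausdorff limit $Y$ on which $\Z^m$ acts cocompactly by isometries. Near a hypothetical interior singular point $x_0 \in X$, the Kapovitch local theorem quoted in the excerpt describes neighborhoods of preimages in $T^m$ as compact manifolds with boundary whose homotopy type encodes the local collapse, and the local holonomy of the collapsing fibers is a quotient of $\pi_1(M) = \Z^m$, hence abelian and torsion-free. A non-Euclidean Alexandrov cone at $x_0$ would require an isotropy with either a reflection or a nontrivial finite order element in the local model, contradicting this restriction. Hence $X$ is a closed topological (and indeed Riemannian) manifold.

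Once $X$ is a closed Riemannian manifold, Yamaguchi's Theorem \ref{Fibration} produces, for large $n$, a locally trivial fibration $f_n : T^m \to X$ whose fiber $F$ is a closed infranilmanifold of dimension $m-\ell$. Chasing the homotopy long exact sequence and using that $\pi_1(T^m) = \Z^m$ is abelian and torsion-free, $F$ must be a torus $T^{m-\ell}$ and $\pi_1(X) \cong \Z^\ell$. Then $X$ is a closed aspherical $\ell$-manifold with fundamental group $\Z^\ell$, and by topological rigidity for tori (Borel's conjecture for $\Z^\ell$, known by Farrell-Hsiang and Farrell-Jones), $X$ is homeomorphic to $T^\ell$. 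The main obstacle is Step 1: in the $\ell = 1$ case, $\partial X$ is just two points and the cover analysis is relatively clean, but for general $\ell$ the boundary $\partial X$ is a stratified $(\ell-1)$-dimensional Alexandrov space, and a careful induction on $\ell$ together with control of how the $\Z/2$-cover behaves over the boundary stratification will be required. Step 2 is also delicate: the abelian constraint alone may not exclude all cone types, so additional equivariant input from the universal cover sequence, or from Kapovitch-Wilking collapse theory, will likely be needed.
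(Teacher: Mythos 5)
First, a point of context: the statement you are proving is labeled a \emph{conjecture} in the paper. The author proves only the extreme cases --- $\ell=m$ via Perelman's stability theorem and $\ell=1$ via Theorem \ref{main} --- and explicitly presents the general statement as open. So your proposal cannot be measured against a proof in the paper; it has to stand on its own, and as written it is a program with two essential gaps rather than a proof.

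The first gap is Step 1. You posit a ``$\Z/2$-cover adapted to the boundary'' of the limit whose lifts converge to the double of $X$ with the deck involution acting as the reflection across $\partial X$. No such cover is constructed: covers of the sequence correspond to subgroups of $\pi_1(T^m)=\Z^m$, chosen before the limit is taken, and there is no a priori link between an index-two subgroup of $\Z^m$ and the boundary of the limit space. The assertion that some such cover ``unfolds'' $\partial X$ into a reflection is precisely the phenomenon exhibited by the Klein bottle example (where the orientation double cover converges to the doubled interval), and the whole content of Theorem \ref{main} is that for the torus no subgroup of $\Z^m$ produces this picture --- that is the crux to be proved, not an available tool; even in the paper's $\ell=1$ argument the contradiction is reached by the Bishop--Gromov and Alexandrov four-point inequalities applied to a carefully chosen cover, not by identifying the limit of a cover with a double. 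The second gap is Step 2: Kapovitch's theorem constrains the topology of small balls in the collapsing manifolds $X_n$, not the infinitesimal structure of the limit, and singular points of collapsed Alexandrov limits need not be quotient singularities at all, so the inference ``non-Euclidean tangent cone $\Rightarrow$ finite isotropy or reflection in a local model $\Rightarrow$ contradiction with $\Z^m$'' does not follow. Moreover this step aims at more than the conjecture asserts (the limit need only be \emph{homeomorphic} to a torus; it may well carry a singular Alexandrov metric), and the subsequent appeal to Theorem \ref{Fibration} requires the limit to be a Riemannian manifold, which absence of $\delta$-singular points alone does not give --- you would need the BGP version (Theorem \ref{SFibration}) with $K=X$ and then still have to identify the fiber and close the homotopy-exact-sequence argument, where the fiber is only known to be an ANNCGS, not an infranilmanifold. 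The final rigidity step (closed aspherical $\ell$-manifold with fundamental group $\Z^\ell$ is homeomorphic to $T^\ell$) is fine, but everything feeding into it is currently unsupported.
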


\begin{rem}
Gromov and Lawson showed in \cite{GL} that if a Riemannian manifold diffeomorphic to the $m$-dimensional torus has scalar curvature $\geq 0$, then it is flat. The Mahler compactness Theorem asserts that if $g_n$ is a sequence of flat Riemannian metrics in the $m$-dimensional torus $M$, such that the sequence $(M, g_n)$ converges in the Gromov--Hausdorff sense to a compact space $X$, then $X$ is a flat torus (see \cite{Cass}, p.137). Therefore Conjecture \ref{tortotor} is known to be true if we replace sectional curvature $\geq -1$ by scalar curvature $\geq 0$.
\end{rem}

The structure of this note is as follows: In section 2, we give a consequence of our main theorem. In section 3 we give the necessary definitions and preliminaries. In section 4 we give the proof of Theorem \ref{main}.

The author would like to thank Raquel Perales and Anton Petrunin for stimulating his interest in Conjecture \ref{tortotor}, and an anonymous reviewer whose comments improved the exposition of this paper.

\section{Flat Manifolds}

A little bit more can be said about manifolds admitting flat metrics. Recall Bieberbach Theorem and the definition of holonomy group. An elegant proof can be found in \cite{Cha}.

\begin{thm}
 Let $M$ be a flat closed $m$-dimensional manifold. Then its fundamental group fits in an exact sequence 
\begin{center}
$ 0 \to \mathbb{Z}^m \to \pi_1(M) \to H_M \to 0.   $
\end{center}

The group  $\mathbb{Z}^m$ is the only maximal abelian normal subgroup of $\pi_1 (M)$. The group $H_M$ is finite and it is called the \textit{holonomy group of }$M$. The cover associated to $\mathbb{Z}^m \leq \pi_1(M)$ is a flat torus.
\end{thm}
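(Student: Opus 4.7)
My plan is to lift everything to the universal cover and exploit the rigid structure of the Euclidean isometry group. A complete flat Riemannian $m$-manifold has universal cover isometric to $\mathbb{R}^m$ (by Cartan--Hadamard, or directly because the exponential map is a local isometry when the curvature vanishes), so $\Gamma := \pi_1(M)$ acts on $\mathbb{R}^m$ freely, properly discontinuously, and cocompactly by isometries. Every Euclidean isometry has the form $x \mapsto Ax + b$ with $A \in O(m)$, giving a homomorphism $\rho \colon \Gamma \to O(m)$; set $T := \ker \rho$ (the pure translations) and $H_M := \rho(\Gamma)$.

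The heart of the proof is Bieberbach's first theorem: $H_M$ is finite and $T$ is a lattice of full rank $m$ in $\mathbb{R}^m$. This is the main obstacle and I would handle it via Bieberbach's classical commutator trick. For two isometries $\gamma_i = (A_i, b_i)$ whose rotational parts lie in a small neighborhood of $I$ inside the compact group $O(m)$, the commutator $[\gamma_1,\gamma_2]$ has rotational part $[A_1,A_2]$, which is \emph{quadratically} closer to $I$. Iterating this inside $\Gamma$ produces a sequence of elements whose rotational parts tend to $I$ while their translational parts stay bounded; discreteness of the $\Gamma$-action then forces these iterates to eventually be pure translations. A covering argument on $O(m)$ converts this into a finite bound on $|H_M|$, and cocompactness of $\Gamma$ then forces $T$ to $\mathbb{R}$-span $\mathbb{R}^m$, giving $T \cong \mathbb{Z}^m$.

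With Bieberbach's theorem in hand, $T$ is abelian, normal as $\ker \rho$, and of finite index, producing the claimed exact sequence. The cover corresponding to $T \leq \Gamma$ is $\mathbb{R}^m / T$, a flat torus, as required.

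For the uniqueness, let $N \triangleleft \Gamma$ be an abelian normal subgroup and take $n = (A, b) \in N$. For every $t = (I, v) \in T$, normality gives $t n t^{-1} = (A,\, b + (I-A)v) \in N$, and imposing commutativity of $N$ on the pair $n,\, t n t^{-1}$ reduces after a short calculation to $(I-A)^2 v = 0$. Since $A \in O(m)$ is normal its minimal polynomial is squarefree, which forces $(I-A)v = 0$. Letting $v$ range over $T$, which spans $\mathbb{R}^m$ over $\mathbb{R}$, I conclude $A = I$, so $n \in T$ and hence $N \subseteq T$.
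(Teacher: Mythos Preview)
Your argument is correct and follows the classical route to Bieberbach's theorem: identify the universal cover with $\mathbb{R}^m$, use the rotation homomorphism $\rho$ to define $T=\ker\rho$ and $H_M=\rho(\Gamma)$, invoke Bieberbach's commutator estimate to show $H_M$ is finite and $T$ has full rank, and then verify uniqueness of $T$ among abelian normal subgroups via the computation leading to $(I-A)^2v=0$ and the squarefree minimal polynomial of a normal operator. Each step is sound as sketched.

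However, there is nothing to compare against: the paper does not prove this theorem. It is stated purely as background (Bieberbach's theorem) with a reference to Charlap's book \emph{Bieberbach groups and flat manifolds} for ``an elegant proof,'' and is then used as a black box in the proof of Theorem~\ref{Flat}. Your write-up is essentially the standard proof one finds in that reference, so in spirit it matches what the paper defers to, but strictly speaking you have supplied a proof where the paper supplies only a citation.
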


\begin{thm}\label{Flat}
 Let $M$ be a closed $m$-dimensional manifold that admits a flat metric. If there is a sequence $g_n$ of Riemannian metrics with $sec(M, g_n) \geq -1$ such that $(M, g_n) $ converges to an interval $[0,L]$, then the holonomy group $H_M$ has a subgroup of index 2.
\end{thm}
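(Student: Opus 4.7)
The plan is to lift the sequence to the torus cover of $M$ and invoke Theorem~\ref{main}. Let $\pi : \tilde{M} \to M$ be the regular cover corresponding to the subgroup $\mathbb{Z}^m \leq \pi_1(M)$ of Bieberbach's theorem, so that its deck group is exactly $H_M$. Since $M$ admits a flat metric, $\tilde{M}$ inherits one with abelian fundamental group, and is therefore diffeomorphic to the standard $m$-torus. Lifting each $g_n$ produces an $H_M$-invariant Riemannian metric $\tilde{g}_n$ on $\tilde{M}$ with the same lower sectional curvature bound $-1$, and each quotient $(\tilde{M}, \tilde{g}_n)/H_M$ is isometric to $(M, g_n)$.

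Next I would extract an equivariant Gromov--Hausdorff limit. To do this I need uniform diameter control on $(\tilde{M}, \tilde{g}_n)$; this is the main technical obstacle. I expect it to come from combining Gromov's short-generator theorem applied to $(M, g_n)$ (valid under bounded diameter and $sec \geq -1$) with the fact that $H_M$ is finite: each element of $H_M$ can be written as a word of length $\leq |H_M|$ in a short generating set for $\pi_1(M)$, so each element of $H_M$ is represented by a loop in $(M, g_n)$ of length at most $2|H_M|\cdot \mathrm{diam}(M, g_n)$. This bounds the orbit displacements $d_{\tilde{g}_n}(\tilde{x}, h\tilde{x})$ for each $h \in H_M$, and hence bounds the diameter of $\tilde{M}$. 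A subsequence then converges, by Theorem~\ref{Compactness}, to a compact Alexandrov space $Y$, and the $H_M$-actions pass to an isometric action on $Y$ with $Y/H_M = [0,L]$.

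Because $Y/H_M = [0, L]$ is one-dimensional and $H_M$ is finite, the quotient map preserves Alexandrov dimension, so $Y$ itself is one-dimensional. A compact one-dimensional Alexandrov space of curvature $\geq -1$ is either a circle or a closed interval. If $Y$ were a closed interval, then since $\tilde{M}$ is diffeomorphic to the $m$-torus, the sequence $(\tilde{M}, \tilde{g}_n) \to Y$ would directly contradict Theorem~\ref{main}. Therefore $Y$ is homeomorphic to $S^1$.

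Finally, the induced homomorphism $H_M \to \mathrm{Isom}(S^1) = O(2)$ has image that cannot be contained in $SO(2)$, for otherwise the quotient $Y/H_M$ would itself be a circle rather than an interval. Therefore the preimage of $SO(2) \subset O(2)$ in $H_M$ is a proper subgroup of index $2$, which is the desired subgroup. The only step that is not essentially formal (given Theorem~\ref{main}) is the diameter bound in the second paragraph; the rest is extracting a limit and a routine inspection of finite subgroups of $O(2)$.
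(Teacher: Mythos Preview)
Your proof is correct and follows essentially the same route as the paper's: lift to the torus cover, use a diameter bound to extract an equivariant Gromov--Hausdorff limit $Y$ with $Y/H_M=[0,L]$, rule out $Y$ being an interval via Theorem~\ref{main}, and read off the index-$2$ subgroup from the isometric action on the resulting circle. The diameter bound you flag as the only nontrivial step is simply asserted in the paper as $\mathrm{diam}\le 2L\,|H_M|$ (your short-generator sketch is a valid justification), and the equivariant-limit passage you invoke informally is exactly the paper's Lemma~\ref{Equiv}.
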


For the proof of Theorem \ref{Flat}, we will need the following elementary result.

\begin{lem}\label{Equiv}
(\cite{GPoly}, Section 6)  Let $\Gamma$ be a finite group and $X_n$ a sequence of compact metric spaces converging in the Gromov--Hausdorff sense to the space $X$. Assume we have a sequence of isometric group actions $\Gamma \to Iso (X_n)$. Then there is an isometric group action $\Gamma \to Iso (X)$ such that a subsequence of $X_n / \Gamma$ converges in the Gromov--Hausdorff sense to $X / \Gamma $.  
\end{lem}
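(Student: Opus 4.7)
The plan is to extract isometric limits of the group actions via an Arzelà--Ascoli argument, assemble them into an isometric $\Gamma$-action on $X$, and then verify that the descended Gromov--Hausdorff approximations witness convergence of the quotients.

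Fix a sequence of Gromov--Hausdorff $\varepsilon_n$-approximations $f_n \colon X_n \to X$ with $\varepsilon_n \to 0$, together with approximate inverses $g_n \colon X \to X_n$ satisfying $d(f_n g_n(x), x) \le \varepsilon_n$ and $|d(g_n(x), g_n(y)) - d(x,y)| \le \varepsilon_n$. For each $\gamma \in \Gamma$ denote by $\gamma_n \in \mathrm{Iso}(X_n)$ the given isometry, and define $\widetilde{\gamma}_n := f_n \circ \gamma_n \circ g_n \colon X \to X$. Each $\widetilde{\gamma}_n$ distorts distances by at most $3\varepsilon_n$, so the family is uniformly equicontinuous. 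Choose a countable dense set $D \subset X$; since $\Gamma$ is finite, a diagonal argument extracts a subsequence along which $\widetilde{\gamma}_n(x)$ converges for every $x \in D$ and every $\gamma \in \Gamma$. Equicontinuity lets us extend the pointwise limits to maps $\widetilde{\gamma} \colon X \to X$, and by Arzelà--Ascoli (using compactness of $X$) the convergence is in fact uniform in $x$.

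Next I would verify that the limit maps form an isometric action. Passing to the limit in $|d(\widetilde{\gamma}_n x, \widetilde{\gamma}_n y) - d(x,y)| \le 3 \varepsilon_n$ shows each $\widetilde{\gamma}$ preserves distances; a distance-preserving self-map of a compact metric space is automatically a bijection, so $\widetilde{\gamma} \in \mathrm{Iso}(X)$. The group relations $\widetilde{\gamma_1 \gamma_2} = \widetilde{\gamma}_1 \widetilde{\gamma}_2$ and $\widetilde{e} = \mathrm{id}_X$ follow by taking uniform limits in the identities
$$\widetilde{\gamma_1 \gamma_2}_{\, n} = f_n \circ \gamma_{1,n} \circ (g_n f_n) \circ \gamma_{2,n} \circ g_n,$$
together with the fact that $g_n \circ f_n$ is within $O(\varepsilon_n)$ of $\mathrm{id}_{X_n}$ and $\gamma_{1,n}, \gamma_{2,n}$ are isometries (so composing with them preserves this error).

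Finally, to show $X_n/\Gamma \to X/\Gamma$, I would use the descended maps $\overline{f}_n \colon X_n/\Gamma \to X/\Gamma$ induced by $f_n$ and the constructed action on $X$. For $x,y \in X_n$, the quotient distance is
$$d_{X_n/\Gamma}([x],[y]) = \min_{\gamma \in \Gamma} d_{X_n}(x, \gamma_n y),$$
and using $d_X(f_n(\gamma_n y), \widetilde{\gamma}(f_n(y))) \le d_X(f_n(\gamma_n y), \widetilde{\gamma}_n(f_n(y))) + $ (uniform convergence error), both terms tend to $0$. Minimizing over the finite set $\Gamma$ preserves this, so $\overline{f}_n$ distorts the quotient distance by $o(1)$. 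Surjectivity up to $o(1)$ is inherited from $f_n$, so $\overline{f}_n$ is a Gromov--Hausdorff approximation.

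The main obstacle is the last step: the quotient distance is a minimum over $\Gamma$ of genuine distances in $X_n$ and $X$, and to compare these one needs the convergence $\widetilde{\gamma}_n \to \widetilde{\gamma}$ to be \emph{uniform}, not merely pointwise on a dense set. This is where compactness of $X$ and Arzelà--Ascoli are essential; without uniform control, the comparison of minima over $\Gamma$ could fail. The finiteness of $\Gamma$ is used both to run the diagonal extraction over all group elements simultaneously and to commute the limit with the minimum in the quotient distance.
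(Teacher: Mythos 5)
The paper does not prove this lemma at all---it is quoted from Gromov's polynomial growth paper---so there is nothing internal to compare against; your argument is the standard equivariant Gromov--Hausdorff convergence proof and is essentially correct. Two small points to tighten: the maps $\widetilde{\gamma}_n = f_n\circ\gamma_n\circ g_n$ need not be continuous, so strictly speaking you cannot invoke equicontinuity/Arzel\`a--Ascoli verbatim; instead use the standard variant for almost-isometries (pointwise convergence on a countable dense set, the limit is distance-preserving hence extends to an isometry of the compact space $X$, and uniform convergence then follows from the vanishing distortion). Likewise $\overline{f}_n([x]) := [f_n(x)]$ is only approximately well defined, since $f_n$ intertwines the actions only up to $o(1)$; this is harmless because a Gromov--Hausdorff approximation need not be canonical---any choice of representatives gives a map whose distortion of the quotient distances $\min_{\gamma\in\Gamma} d(x,\gamma y)$ is $o(1)$ by your uniform estimates, the minimum over the finite group $\Gamma$ causing no loss.
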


\begin{proof}[Proof of Theorem \ref{Flat}]
Let $X_n$ be the torus metric cover of $(M, g_n)$ with $X_n / H_M = (M, g_n) $. The diameter of $X_n$ is bounded above by $2L \vert H_M \vert $ for large enough $n$, hence by Theorems \ref{Compactness} and \ref{Alex}, $X_n$ converges up to subsequence to an Alexandrov space $X$ of dimension $\leq m$. By Lemma \ref{Equiv}, there is an isometric group action $H_M \to Iso (X)$ such that $X/ H_M = [0,L]$. The quotient of a compact finite dimensional Alexandrov space by a finite group is another compact Alexandrov space with the same dimension. This means that $X$ is a compact $1$-dimensional Alexandrov space, so it is a circle or a closed interval. By Theorem \ref{main}, $X$ is a circle, and the action $ H_M \to Iso(X)$ is either cyclic or dyhedral. The quotient of $X$ by a dihedral group is an interval, and the quotient of $X$ by a cyclic group is a shorter circle. Since $X/H_M = [0,L]$, the image of $H_M$ in $Iso(X)$ is a dihedral group, which has a subgroup of index 2. 
\end{proof}

Theorem \ref{Flat} implies in particular that if the holonomy group $H_M$ is simple, or has odd order, then $M$ cannot collapse to an interval under a lower sectional curvature bound. The following Theorem by Auslander and Kuranishi tells us the relevance of Theorem \ref{Flat}.

\begin{thm}
(\cite{AK}, Theorem 3). Let $H$ be a finite group. Then there is a flat manifold $M$ with $H_M = H$.
\end{thm}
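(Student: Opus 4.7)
The plan is to construct a torsion-free crystallographic group $\Gamma \subset \mathrm{Isom}(\mathbb{R}^m)$ whose translation subgroup $L \cong \mathbb{Z}^m$ satisfies $\Gamma/L \cong H$ via a faithful orthogonal action. By the Bieberbach structure theorem already quoted, $M := \mathbb{R}^m/\Gamma$ is then a closed flat Riemannian manifold with holonomy group $H_M \cong H$, which is exactly what is required.

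The construction has two stages. First, I would fix a faithful integral orthogonal representation $\rho_0 : H \hookrightarrow O(m_0)$ preserving a full-rank lattice $L_0 \subset \mathbb{R}^{m_0}$; Cayley's embedding $H \hookrightarrow S_{|H|}$ and the associated permutation representation produces one such $\rho_0$ for every finite $H$. Second, for a suitable $H$-invariant lattice $L \supseteq L_0$, I would select a cohomology class $[\alpha] \in H^2(H;L)$ and form the corresponding extension $1 \to L \to \Gamma \to H \to 1$; concretely $\Gamma$ consists of affine isometries of the form $(\rho(h), v + \alpha(h))$ for $h \in H$ and $v \in L$, acting on $L \otimes \mathbb{R}$.

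The main obstacle, and the technical content of Auslander--Kuranishi, is arranging torsion-freeness of $\Gamma$. Because every finite-order Euclidean isometry has a fixed point, $\Gamma$ is torsion-free if and only if no lift to $\Gamma$ of any prime-order element of $H$ has a fixed point on $L \otimes \mathbb{R}$; this translates into a non-vanishing condition on the restriction $\mathrm{res}^H_C[\alpha] \in H^2(C;L)$ for every prime-order cyclic subgroup $C \leq H$. To arrange this simultaneously, I would enlarge $L$ by adjoining, for each such $C$, an induced summand $M_C := \mathrm{Ind}_C^H \mathbb{Z}$. Shapiro's lemma then gives $H^2(H;M_C) \cong H^2(C;\mathbb{Z}) \cong \mathbb{Z}/|C|$, and I can pick a generator $[\beta_C]$. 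Setting $L := L_0 \oplus \bigoplus_C M_C$ and $[\alpha] := 0 \oplus \bigoplus_C [\beta_C]$, the $L_0$-summand ensures the rotation representation $\rho$ is faithful while each $[\beta_C]$-summand guarantees that the restriction to $H^2(C;L)$ is non-trivial in exactly the way needed to prevent lifts of a generator of $C$ from having fixed points.

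With $[\alpha]$ so chosen, $\Gamma$ acts freely and properly discontinuously on $L \otimes \mathbb{R}$ by Euclidean isometries, and the quotient $M := (L \otimes \mathbb{R})/\Gamma$ is a closed flat Riemannian manifold. The Bieberbach sequence then identifies its holonomy group as $\Gamma/L \cong H$. Every other step is either an immediate consequence of the Bieberbach theorem or a standard fact from the cohomology of finite groups; the only delicate point is the simultaneous non-vanishing of the restriction of $[\alpha]$ to every prime-order cyclic subgroup, which the induced-module construction handles cleanly.
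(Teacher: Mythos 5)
The paper offers no proof of this statement --- it is quoted directly from Auslander--Kuranishi \cite{AK} --- so there is nothing internal to compare your argument against; what you have written is essentially the standard cohomological proof of that theorem (the one presented in Charlap's book \cite{Cha}, the paper's own reference for Bieberbach theory), and its outline is correct. The skeleton is sound: a torsion-free crystallographic extension $1 \to L \to \Gamma \to H \to 1$ with faithful $H$-action on $L$ yields a closed flat manifold with holonomy exactly $H$; torsion-freeness is equivalent to $\mathrm{res}^H_C[\alpha] \neq 0$ in $H^2(C;L)$ for every prime-order cyclic $C \leq H$; and such a ``special'' class exists after enlarging $L$ by the summands $M_C = \mathbb{Z}[H/C]$, by Shapiro's lemma. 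Three points deserve explicit justification in a full write-up. First, the torsion-freeness criterion: a finite subgroup of $\Gamma$ meets the torsion-free $L$ trivially, hence maps isomorphically onto a subgroup of $H$ and splits the restricted extension, and conversely a splitting over a prime-order subgroup creates torsion. Second, the reason a nonzero class $[\beta_C] \in H^2(H;M_C)$ restricts nontrivially to $C$ is that the Shapiro isomorphism factors as $\mathrm{res}^H_C$ followed by the map induced by the $C$-equivariant projection $M_C \to \mathbb{Z}$, so nonvanishing of the Shapiro image forces nonvanishing of the restriction itself. Third, faithfulness of $\rho$ (your $L_0$ summand) is what makes $L$ the full translation subgroup, i.e.\ the maximal abelian normal subgroup in the Bieberbach sequence, so the holonomy is all of $H$ and not a proper quotient. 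With these details filled in, your construction is a complete and correct proof, matching the now-standard treatment of the cited result.
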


\section{Prerequisites}

\subsection{Gromov--Hausdorff distance}

Gromov--Hausdorff distance was introduced to quantitatively measure how far two metric spaces are from being isometric.
\begin{defn}
Let $A,B$ be two metric spaces. The Gromov--Hausdorff distance $d_{GH}(A,B)$ between $A$ and $B$ is defined as 
$$d_{GH}(A,B) : = \inf_{\varphi, \psi} d_H( \varphi (A), \psi (B) )  ,   $$
 where $d_H$ denotes the Hausdorff distance, and the infimum is taken over all isometric embeddings $\varphi : A \to C$, $\psi : B \to C$ into a common metric space $C$.
\end{defn}

We refer to (\cite{BBI}, Chapter 7) for the basic theory about Gromov--Hausdorff distance, including the following equivalence of convergence with respect to such metric.

\begin{thm}
Let $X_n$ be a sequence of compact metric spaces. The sequence $X_n$ converges in the Gromov--Hausdorff sense to a compact metric space $X$ if and only if there is a sequence of maps $f_n : X_n \to X$ such that
$$ \lim_{n \to \infty} \sup_{x,y \in X_n} \vert d(f_n (x), f_n (y)) - d(x,y) \vert =0,   $$
and
$$   \lim_{n \to \infty} \sup_{x \in X}  \inf_{y \in X_n} d(x, f_n ( y)   ) =0  .$$
A sequence of functions $f_n$ satisfying the above properties are called Gromov--Hausdorff approximations. 
\end{thm}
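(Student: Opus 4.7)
The plan is to prove the two implications separately. For the forward direction, I extract approximations directly from the isometric embeddings guaranteed by the definition of the Gromov--Hausdorff distance. For the backward direction, I build, for each $n$, an explicit metric on the disjoint union $X_n \sqcup X$ in which the two factors are close in Hausdorff distance, and then read off the desired bound on $d_{GH}(X_n, X)$.

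For the forward direction, the assumption $d_{GH}(X_n, X) \to 0$ yields a null sequence $\varepsilon_n \to 0$ together with isometric embeddings $\varphi_n \colon X_n \to C_n$ and $\psi_n \colon X \to C_n$ satisfying $d_H(\varphi_n(X_n), \psi_n(X)) < \varepsilon_n$. For each $x \in X_n$ I pick (by choice) a point $f_n(x) \in X$ with $d_{C_n}(\varphi_n(x), \psi_n(f_n(x))) < \varepsilon_n$. Two triangle inequalities in $C_n$ then give $|d(f_n(x), f_n(y)) - d(x, y)| \leq 2\varepsilon_n$, and for any $p \in X$ one first finds $q \in X_n$ with $d_{C_n}(\psi_n(p), \varphi_n(q)) < \varepsilon_n$ and then estimates $d(p, f_n(q)) < 2\varepsilon_n$, establishing the density condition.

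For the backward direction, given approximations $f_n \colon X_n \to X$ and a null sequence $\varepsilon_n \to 0$ that dominates both suprema in the statement, I would define a metric $\rho_n$ on $Z_n := X_n \sqcup X$ by keeping the original distances on each factor and setting
\[
\rho_n(x, p) := \inf_{y \in X_n}\bigl(d_{X_n}(x, y) + d_X(f_n(y), p)\bigr) + \varepsilon_n / 2
\]
for $x \in X_n$ and $p \in X$. The delicate step is the triangle inequality for a triple with two endpoints in $X$ and the middle in $X_n$: expanding $\rho_n(p, x) + \rho_n(x, q)$ produces two mixed infima whose $X_n$--distances combine, by the triangle inequality, into $d_{X_n}(y_1, y_2) \geq d_X(f_n(y_1), f_n(y_2)) - \varepsilon_n$, and the two additive $\varepsilon_n/2$ terms are exactly what absorb this error before closing up with the triangle inequality in $X$. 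The remaining triangle inequalities are routine rearrangements of the infimum. Taking $y = x$ in the infimum gives $\rho_n(x, f_n(x)) \leq \varepsilon_n/2$, while the density condition provides, for each $p \in X$, some $y \in X_n$ with $\rho_n(p, y) < 3\varepsilon_n/2$; hence $d_H^{Z_n}(X_n, X) \leq 3\varepsilon_n/2$ and $d_{GH}(X_n, X) \to 0$. The verification of that one triangle inequality is the only genuine obstacle, and the choice of the constant $\varepsilon_n/2$ is what calibrates the construction to make it go through.
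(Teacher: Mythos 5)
Your argument is correct, but note that the paper does not prove this statement at all: it is quoted as background and delegated to Burago--Burago--Ivanov (Chapter 7), so there is no internal proof to compare against. What you wrote is essentially the standard textbook argument. The forward direction (reading off an approximation from nearly-optimal isometric embeddings $\varphi_n,\psi_n$ into a common space, with the two triangle inequalities giving distortion and density bounds of order $2\varepsilon_n$) is exactly the classical proof. For the converse, BBI phrases things via correspondences and the identity $d_{GH}=\tfrac12\inf\operatorname{dis}R$, whereas you build the glued metric on $X_n\sqcup X$ explicitly; both are standard and your inf-based cross-distance with the additive $\varepsilon_n/2$ is calibrated correctly, as one checks that in the case of two endpoints in $X$ and middle point in $X_n$ the combined $\varepsilon_n$ exactly absorbs the distortion error. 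One small caveat: the case with two endpoints in $X_n$ and the middle point in $X$ is \emph{not} a mere rearrangement of the infimum --- it also needs the distortion bound, via $d_X(f_n(y_1),p)+d_X(f_n(y_2),p)\ge d_X(f_n(y_1),f_n(y_2))\ge d_{X_n}(y_1,y_2)-\varepsilon_n$ --- but it closes with the same $\varepsilon_n$ budget, so the proof stands; the final estimates $\rho_n(x,f_n(x))\le\varepsilon_n/2$ and $\rho_n(p,y)\le 3\varepsilon_n/2$ then give $d_{GH}(X_n,X)\le 3\varepsilon_n/2\to 0$ as you claim.
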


\subsection{Yamaguchi--Burago--Gromov--Perelman Fibration}

In a compact Alexandrov space $X$ of dimension $\ell$, one can quantify how degenerate a point $p \in X$ is by studying the Gromov--Hausdorff distance between its space of directions $\Sigma _pX$ and the standard sphere $\mathbb{S}^{\ell -1}$.  To construct $\Sigma_pX$, one needs to put a metric on the set of geodesics in $X$ emanating from $p$. Given two minimizing geodesics $\gamma_i : [0, \delta_i] \to X$ parametrized by arc length with $\gamma_i(0)=p$ for $i=1,2$, we define the angular distance between them as

$$  d (\gamma_1, \gamma_2 ) : = \lim _{s,t \to 0^+} \cos^{-1}  \left(  \dfrac{ d( \gamma_1(s), \gamma_2 (t)  )^2 - s^2 - t^2   }{st}    \right)   \in [0, \pi]  .        $$

The set of all minimizing geodesics in $X$ starting from $p$ equipped with the angular metric form a semi-metric space $S_pX$. The space $\Sigma_pX$ is defined as the metric completion of the metric space asociated to $S_pX$. 

 For $\delta >0$, we say that a point $p$ is $\delta$-regular if  $d_{GH} (\Sigma_pX , \mathbb{S}^{\ell -1}) < \delta$. The set of $\delta$-regular points $U_{\delta}(X) \subset X $ form an open dense set, and for small enough $\delta$, they form an $\ell$-dimensional (topological) manifold. Burago, Gromov, and Perelman noticed that Theorem \ref{Fibration} has a version for when $X$ is singular.

\begin{thm}\label{SFibration}
(\cite{BGP}, Section 9). For small enough $\delta (m, c)$ the following holds. Let $X_n$ be a sequence of closed $m$-dimensional Riemannian manifolds with sectional curvature $\geq c$ converging in the Gromov--Hausdorff sense to a compact space $X$. Then for any compact $K \subset U_{\delta}(X)$ there is a sequence of Gromov--Hausdorff approximations $f_n : X_n \to X$ such that for large enough $n$, $f_n \vert_{f_n^{-1}(K)}$ is continuous and morover, it is a locally trivial fibration with fiber $F_n$, a compact almost nonnegatively curved manifold in the generalized sense of dimension $m-\ell$ (ANNCGS($m-\ell$)) (see \cite{KPT}, Definition 1.4.1). 
\end{thm}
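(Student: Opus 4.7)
The plan is to construct the maps $f_n$ locally using distance functions to strainer points in the $\delta$-regular set, and then upgrade the resulting almost submersion to a genuine locally trivial fibration via Perelman's Morse theory on Alexandrov spaces.

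First, I would invoke the defining property of $\delta$-regular points: at each $p \in U_\delta(X)$ there is an $(\ell,\delta)$-strainer $\{(a_i,b_i)\}_{i=1}^{\ell}$ with $\angle a_i p a_j \approx \pi/2$ and $\angle a_i p b_i \approx \pi$. Toponogov comparison (using $\mathrm{sec}(X) \geq c$) shows that $h_p(x) = (d(x,a_1),\ldots,d(x,a_\ell))$ is a bi-Lipschitz open embedding of a neighborhood of $p$ into $\R^\ell$. Cover the compact set $K \subset U_\delta(X)$ by finitely many such neighborhoods $U_{p_j}$, and use the GH approximations to lift the strainer points to $(a_i^{n,j}, b_i^{n,j}) \in X_n$. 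Define $f_n$ locally by composing the analogous distance map on $X_n$ with $h_{p_j}^{-1}$, then glue the local pieces with a partition of unity in the target $\R^\ell$.

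Next, verify that $f_n$ is an almost submersion on $f_n^{-1}(K)$. On the manifold $X_n$ the distance functions $d(\cdot, a_i^{n,j})$ are semiconcave, and the strainer angle conditions, together with the first variation formula applied under $\mathrm{sec}(X_n) \geq c$, force the ``gradients'' of these $\ell$ distance functions to form a nearly orthonormal system in which no nontrivial convex combination has norm close to zero. This is exactly Perelman's admissibility condition for a distance-function map, which rules out critical points over a neighborhood of $K$. Standard Morse theory for admissible maps on Alexandrov (and a fortiori Riemannian) spaces then produces gradient-like vector fields transverse to all level sets, whose flow integrates to the local trivializations required for local triviality; consistency between different charts is arranged through the partition of unity.

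To identify the fiber $F_n = f_n^{-1}(x_0)$ as ANNCGS$(m-\ell)$, rescale by $\lambda_n^2 = \mathrm{diam}(F_n)^{-2} \to \infty$. Then $(X_n, \lambda_n^2 g_n)$ has sectional curvature $\geq c/\lambda_n^2 \to 0$, and $F_n$ sits inside as a unit-diameter submanifold of dimension $m-\ell$ (the rank of the submersion). Passing to a pointed GH subsequential limit based at a point of $F_n$ yields a nonnegatively curved Alexandrov space containing a limit of $F_n$ as a compact subset, matching the definition of ANNCGS in \cite{KPT}. The main obstacle is the rigorous passage from almost submersion to genuine fibration: the distance functions on $X_n$ are not $C^1$, so one must deploy the full machinery of gradient curves for semiconcave functions (generalized differentials, Perelman's flow) to integrate the transverse vector field, and verify that the trivializations coming from different strainer charts patch together into a single fibration over the whole of $K$. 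The ANNCGS conclusion for the fiber is also subtle, since the rescaling argument only yields a limit-level statement, and one must ensure that the fiber itself---not just its limit---carries the structure described in \cite{KPT}.
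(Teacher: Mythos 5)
The paper does not prove this statement at all: it is quoted verbatim as background, with the fibration itself attributed to \cite{BGP} (Section 9) and the identification of the fiber as an ANNCGS($m-\ell$) to the framework of \cite{KPT}. So there is no proof in the paper to compare against; your proposal has to be judged as an attempted proof of a known but genuinely hard theorem, and as such it is an outline of the standard Yamaguchi--Burago--Gromov--Perelman strategy rather than a proof. Two of its steps are genuine gaps, and you in fact flag them yourself without closing them.

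First, the globalization step does not work as written. Your local maps are strainer charts $h_{p_j}^{-1}\circ(\text{distance map on }X_n)$ with values in $X$, and ``gluing with a partition of unity in the target $\R^\ell$'' is only meaningful inside a single chart; on overlaps the two Euclidean identifications differ by nonlinear transition maps, and there is no canonical affine structure on $X$ in which to average. The actual constructions avoid this by building one globally defined map from averaged (or $L^2$-embedded) and then mollified distance functions on the smooth manifolds $X_n$, and only then running the regularity/Morse-theoretic argument; the smoothing is also what lets you integrate an honest vector field rather than invoking gradient flows of semiconcave functions and then worrying about patching flows from different charts. Verifying that the glued map is still a Gromov--Hausdorff approximation and still uniformly regular over all of $K$ is part of this work and is not addressed. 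Second, and more seriously, the conclusion that the fiber $F_n$ is ANNCGS($m-\ell$) is not obtained by your rescaling argument: blowing up the ambient metric by $\mathrm{diam}(F_n)^{-2}$ produces a statement about pointed limits of rescaled ambient spaces, whereas the ANNCGS property in the sense of \cite{KPT}, Definition 1.4.1, is a structure attached to the manifold $F_n$ itself, and establishing it for the fibers of the fibration (uniformly, for the actual fibers and not just a subsequential limit object) is precisely the nontrivial content for which the paper cites \cite{KPT}. Note also that the induced intrinsic metrics on $F_n$ carry no a priori lower curvature bound (second fundamental forms are not controlled), which is exactly why the ``generalized sense'' exists; so there is no shortcut through intrinsic almost nonnegative curvature either. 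As it stands your text is a correct road map with the two hardest segments left unbuilt.
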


Since the definition of an almost non-negatively curved manifold in the generalized sense is technical and we will not use it, we will omit it. The only result we will need regarding such manifolds is the following.

\begin{thm}\label{YamBetti}
(\cite{Yam}, Pinching Theorem). The first Betti number of  an $ANNCGS(d)$ is $\leq d$. 
\end{thm}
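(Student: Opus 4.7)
The plan is to follow Yamaguchi's original strategy for almost nonnegatively curved manifolds. Without the general definition from \cite{KPT} at hand, I treat the essential case: $F$ is a closed $d$-dimensional manifold admitting a sequence of Riemannian metrics $g_n$ with $\operatorname{sec}(F, g_n) \geq -1/n$ and $\operatorname{diam}(F, g_n) \to 0$. Set $b := b_1(F)$ and consider the natural surjection $\pi_1(F) \to H_1(F; \Z)/\text{torsion} \cong \Z^b$.

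First I would pass to the corresponding $\Z^b$-cover $\hat{F} \to F$, equipped with the lifted metrics $\hat{g}_n$, which still satisfy $\operatorname{sec}(\hat{g}_n) \geq -1/n$. Pick basepoints $\hat{p}_n \in \hat{F}$ and, following Gromov, choose a short basis $\gamma_1^{(n)}, \ldots, \gamma_b^{(n)}$ of $\Z^b$: inductively, $\gamma_i^{(n)}$ is an element of $\Z^b$ outside the subgroup generated by $\gamma_1^{(n)}, \ldots, \gamma_{i-1}^{(n)}$ whose $\hat{g}_n$-displacement $d_i^{(n)} := d_{\hat{g}_n}(\hat{p}_n, \gamma_i^{(n)} \hat{p}_n)$ is minimal. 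Rescale $\hat{g}_n$ by the factor $1/d_b^{(n)}$ so that the largest displacement becomes $1$.

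Next I would apply equivariant pointed Gromov--Hausdorff compactness (in the spirit of Lemma \ref{Equiv} but for proper actions on pointed spaces) to extract, along a subsequence, a pointed limit $(\hat{F}_\infty, \hat{p}_\infty)$ carrying an isometric $\Z^b$ action. The rescaled curvature bound reads $\operatorname{sec} \geq -\varepsilon_n$ with $\varepsilon_n \to 0$, so by Theorem \ref{Alex} the limit $\hat{F}_\infty$ is a nonnegatively curved Alexandrov space of dimension at most $d$. A Margulis-type lemma for spaces with lower curvature bound provides a positive lower bound on the rescaled displacement of any short-basis generator; hence each $\gamma_i^{(n)}$ converges to a nontrivial isometry $\gamma_i^{\infty}$, and the resulting limit isometries generate a free abelian subgroup of rank $b$ inside $\operatorname{Iso}(\hat{F}_\infty)$.

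Finally I would conclude via splitting. Each $\gamma_i^{\infty}$ is fixed-point free (as a limit of freely acting deck transformations) and the $b$ generators commute pairwise; they act as Clifford translations on $\hat{F}_\infty$, so by the Cheeger--Gromoll splitting theorem for nonnegatively curved Alexandrov spaces, $\hat{F}_\infty$ decomposes isometrically as $\R^k \times N$ with $k \geq b$ and the $\Z^b$ action cocompact on the Euclidean factor. Combined with $\dim \hat{F}_\infty \leq d$, this yields $b \leq k \leq d$, which is the desired estimate $b_1(F) \leq d$. The main obstacle is the non-degeneracy step in paragraph three: the short-basis normalization together with the Margulis lemma must rule out two or more generators becoming linearly dependent in the limit, which is precisely why an arbitrary generating set of $\Z^b$ will not do and why the short-basis choice, tuned to the rescaling, is indispensable.
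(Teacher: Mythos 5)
This statement is not proven in the paper at all -- it is imported as a black box from Yamaguchi (and, in the generality actually needed, from the Kapovitch--Petrunin--Tuschmann framework) -- so your attempt has to stand on its own, and it does not: the step you yourself flag as the main obstacle is a genuine gap, not a technicality. Rescaling by the largest short-basis displacement $d_b^{(n)}$ normalizes only $\gamma_b^{(n)}$; there is no Margulis-type lemma giving a uniform positive lower bound on the rescaled displacements $d_i^{(n)}/d_b^{(n)}$ of the other generators, and in general these ratios tend to $0$. Margulis-type lemmas control the structure of the group generated by isometries of small displacement (almost nilpotency); they never bound the displacement of prescribed generators from below. Already for flat tori $\mathbb{R}^2/(\varepsilon_n^2\Z\times\varepsilon_n\Z)$ your normalization sends the shorter generator to the identity in the limit, so the limit isometry group you obtain contains only a rank-one subgroup: the limit of the $\Z^b$-action need not contain a copy of $\Z^b$, and the inequality $k\geq b$ in your final step evaporates. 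The supporting assertions are also unjustified: a limit of fixed-point-free isometries can acquire fixed points, and being a limit of deck transformations does not make $\gamma_i^{\infty}$ a Clifford translation; without that (or without cocompactness of the limit action on the Euclidean factor, which you have not established) the splitting theorem does not deliver $k\geq b$.

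Two further remarks. The proofs in the literature confront exactly this multi-scale degeneration: Gromov's argument for $b_1\leq d$ is a packing/counting argument (orbit points of products of short-basis elements are $d_1^{(n)}$-separated, and Bishop--Gromov, Theorem \ref{BGI}, bounds how many fit in a ball of controlled radius), while the Yamaguchi/Fukaya--Yamaguchi approach runs an induction on the dimension of successive rescaled limits, re-rescaling precisely when some generators die in the limit; a single rescaling cannot capture all $b$ independent directions at once. Finally, restricting to the classical case $\operatorname{sec}(F,g_n)\geq -1/n$, $\operatorname{diam}\to 0$ does not cover the statement as it is used in this paper: the fibers $F_n$ produced by Theorem \ref{SFibration} are only known to be almost nonnegatively curved in the generalized sense of \cite{KPT}, and the Betti number bound must be established in that generality -- this is the entire reason the generalized definition is invoked here.
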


\subsection{Comparison Geometry}

We will use two elementary facts about comparison geometry. The first one is called the four point Alexandrov condition.

\begin{lem}\label{AlexDBGP}
(\cite{BGP}, Section 2). Let $N$ be an $m$-dimensional Riemannian manifold with sectional curvature $\geq c$ and $\mathbb{M}^{m}(c)$ be the simply connected complete $m$-dimensional Riemannian manifold of constant curvature $c$. For distinct points $p, a_1,a_2,a_3 \in N$ and $i\in \{ 1,2,3\} $, we set $a_4=a_1$ and call $\theta_i$ the angle at $\tilde{p}$ of a triangle $\tilde{a}_i\tilde{p}\tilde{a}_{i+1}$ in $ \mathbb{M}^m(c)$ with $d(\tilde{p}, \tilde{a}_i) = d( p, a_i) $, $d(\tilde{p}, \tilde{a}_{i+1}) = d( p, a_{i+1}) $, $d(\tilde{a}_i, \tilde{a}_{i+1}) = d( a_i, a_{i+1}) $. Then
$$  \theta_1 + \theta_2 + \theta _3 \leq 2 \pi. $$
This condition is called the Alexandrov condition for the quadruple $(p; a_1, a_2, a_3)$.
\end{lem}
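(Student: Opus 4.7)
The plan is to derive the four-point condition from Toponogov's angle comparison theorem combined with the classical fact that any geodesic triangle on the round unit sphere has perimeter at most $2\pi$.

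First, for each $i \in \{1,2,3\}$ I would fix a minimizing geodesic $\gamma_i$ from $p$ to $a_i$ and a minimizing geodesic between $a_i$ and $a_{i+1}$, producing three geodesic triangles $pa_ia_{i+1}$ in $N$. Let $\alpha_i$ denote the actual angle at $p$ between $\gamma_i$ and $\gamma_{i+1}$. Because $\sec N \geq c$, Toponogov's (triangle) angle comparison theorem applies: the three sides of $pa_ia_{i+1}$ agree by construction with the sides of the comparison triangle $\tilde a_i \tilde p \tilde a_{i+1}$ in $\mathbb{M}^m(c)$ whose angle at $\tilde p$ is $\theta_i$, and the angle comparison inequality gives $\alpha_i \geq \theta_i$ for every $i$.

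Second, I would reinterpret the $\alpha_i$ as spherical distances on the unit tangent sphere. The initial velocities $v_i := \gamma_i'(0)$ are three unit vectors in $T_p N$; regarded as three points on the unit sphere $S^{m-1} \subset T_p N$ (the case $m=1$ is trivial, since then $a_1,a_2,a_3$ lie on a single geodesic through $p$), their pairwise angular distances are precisely $\alpha_1,\alpha_2,\alpha_3$. It therefore suffices to show that any three points on $S^{m-1}$ have angular distances summing to at most $2\pi$.

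Third, I would prove this spherical perimeter bound by an antipodal trick: replacing $v_3$ by $-v_3$ and invoking the ordinary spherical triangle inequality gives
$$ d(v_1,v_2) \leq d(v_1,-v_3) + d(-v_3,v_2) = (\pi - d(v_1,v_3)) + (\pi - d(v_2,v_3)),$$
which rearranges to $\alpha_1+\alpha_2+\alpha_3 \leq 2\pi$. Chaining this with Step~1 yields $\theta_1+\theta_2+\theta_3 \leq 2\pi$, as required. The main obstacle is not conceptual, since both ingredients are classical; the only subtlety is to ensure that minimizing geodesics from $p$ to each $a_i$ and between each $a_i$ and $a_{i+1}$ exist, so that Toponogov applies with the correct direction of inequality. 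This is harmless in the smooth Riemannian setting, and the final bound is independent of the particular minimizers chosen because the inequality $\alpha_i \geq \theta_i$ holds for any admissible choice.
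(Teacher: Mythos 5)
Your argument is correct, and it is essentially the standard derivation of the four-point condition; the paper itself offers no proof of this lemma, citing it directly from Burago--Gromov--Perelman (Section 2), so there is nothing in the text to diverge from. Your two ingredients -- Toponogov's angle comparison giving $\alpha_i \geq \theta_i$ for the genuine angles at $p$ between fixed minimizing geodesics $\gamma_1,\gamma_2,\gamma_3$, and the fact that three directions in the unit tangent sphere have pairwise angular distances summing to at most $2\pi$ (your antipodal trick $d(v_1,v_2) \leq (\pi - d(v_1,v_3)) + (\pi - d(v_2,v_3))$ is a clean way to see this) -- are exactly how this implication is classically obtained. The one point to state honestly rather than wave at is completeness: Toponogov's theorem and the existence of the minimizing geodesics you use require $N$ to be complete (the lemma as quoted omits this hypothesis, but it is implicit, and in the paper's application $N$ is a normal cover $\tilde{X}_n$ of a closed manifold, hence complete), and without completeness the four-point condition can genuinely fail even with $\sec \geq c$, so ``harmless in the smooth Riemannian setting'' is slightly too glib. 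With that hypothesis made explicit, and noting that the angle at $\tilde p$ of the comparison triangle in $\mathbb{M}^m(c)$ coincides with the one in the model surface $\mathbb{M}^2(c)$ so the usual form of Toponogov applies verbatim, your proof is complete.
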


Another ingredient is the Bishop--Gromov inequality.

\begin{thm}\label{BGI}
 (\cite{BC}, p. 253). Let $Z$ be an $m$-dimensional Alexandrov space of curvature $\geq c$, and $\mathbb{M}^{m}(c)$ be the simply connected complete $m$-dimensional Riemannian manifold of constant curvature $c$. Then for $0 < r< R$, $p \in Z$, $q \in \mathbb{M}^m(c)$, we have

$$ \dfrac{Vol (B_R(p)) }{Vol(B_r(p))} \leq \dfrac{Vol(B_R(q))}{Vol (B _r(q))  }  .  $$ 
 
\end{thm}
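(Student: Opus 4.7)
The plan is to prove the stronger monotonicity that $r \mapsto V(r)/V_c(r)$ is non-increasing, where $V(r) := \mathrm{Vol}(B_r(p))$ and $V_c(r) := \mathrm{Vol}(B_r(q))$; cross-multiplying then gives the stated inequality. First, I would set up a polar decomposition around $p$: on the full-measure subset of $x \in Z$ joined to $p$ by a unique minimizing geodesic, encode $x$ by the pair $(r(x), \theta(x))$ with $r(x) = d(p,x)$ and $\theta(x) \in \Sigma_p Z$ the starting direction. This yields a metric analog of the exponential map defined almost everywhere, and allows a coarea decomposition of volume into integrals over concentric spheres.

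The heart of the argument is comparing the rate at which minimizing geodesics from $p$ spread apart in $Z$ with that in the model space $\mathbb{M}^m(c)$. For $0 < s < t$, define the radial projection $\pi_{t,s}: S_t(p) \to S_s(p)$ sending $x$ to the point at distance $s$ from $p$ on the minimizing geodesic $[p, x]$. Applying the four-point Alexandrov condition (Lemma \ref{AlexDBGP}) to quadruples of the form $(p; x, y, \cdot)$ with $x, y \in S_t(p)$ and passing to limits yields the Alexandrov analog of the Rauch estimate: the $(m-1)$-dimensional Hausdorff measure obeys
\begin{equation*}
\frac{\mathcal{H}^{m-1}(S_t(p))}{\mathrm{sn}_c(t)^{m-1}} \leq \frac{\mathcal{H}^{m-1}(S_s(p))}{\mathrm{sn}_c(s)^{m-1}},
\end{equation*}
where $\mathrm{sn}_c$ denotes the model-space Jacobi function (so $\mathrm{sn}_c(r) = \sinh(\sqrt{-c}\,r)/\sqrt{-c}$ when $c < 0$, $\mathrm{sn}_c(r) = r$ when $c = 0$, etc.).

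Combining this pointwise sphere-area comparison with the coarea identities $V(R) - V(r) = \int_r^R \mathcal{H}^{m-1}(S_s(p))\, ds$ and $V_c(R) - V_c(r) = \omega_{m-1} \int_r^R \mathrm{sn}_c(s)^{m-1}\, ds$ shows that the annular ratio $(V(R) - V(r))/(V_c(R) - V_c(r))$ is non-increasing in both $r$ and $R$, from which an elementary calculus manipulation yields the monotonicity of $V(r)/V_c(r)$ and hence the theorem. I expect the main obstacle to be the rigorous execution of the sphere-area comparison in the Alexandrov category, where the distance function $d(p, \cdot)$ is only Lipschitz and the spheres $S_s(p)$ can be quite singular: one must restrict attention to the full-measure set of regular directions, verify that the radial projection there is well-defined and appropriately bi-Lipschitz with the claimed constants, and argue that the singular locus has negligible $\mathcal{H}^{m-1}$-measure. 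This justification is essentially the content of the argument of Burago--Gromov--Perelman, and the cited source \cite{BC} handles the smooth case directly by Jacobi field computation.
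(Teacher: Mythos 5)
The paper does not prove this statement: Theorem \ref{BGI} is quoted as a prerequisite, with the proof deferred to the literature (the citation \cite{BC} covers the smooth Riemannian case; the Alexandrov version is due to Burago--Gromov--Perelman and is also in \cite{BBI}, Section 10.6). So there is no in-paper argument to compare against; what can be assessed is whether your sketch is a viable route, and it is --- it is essentially the standard proof: sphere-area comparison via radial projection, coarea, and the elementary monotonicity lemma turning annular comparison into the ball-ratio comparison.

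Two caveats on the execution. First, the estimate you need from comparison geometry is the one-sided expansion bound $d\bigl(\pi_{t,s}(x),\pi_{t,s}(y)\bigr) \geq \frac{\mathrm{sn}_c(s)}{\mathrm{sn}_c(t)}\, d(x,y)$ for $s<t$; this follows from the monotonicity of comparison angles along pairs of geodesics issuing from $p$ (global Toponogov comparison), which for a complete Alexandrov space requires the globalization theorem and not merely the local four-point condition of Lemma \ref{AlexDBGP} ``passed to limits.'' Note that only this lower bound is available and only it is needed: it makes $\pi_{t,s}$ injective with $(\mathrm{sn}_c(t)/\mathrm{sn}_c(s))$-Lipschitz inverse on its image, which immediately gives $\mathcal{H}^{m-1}(S_s(p)) \geq \bigl(\mathrm{sn}_c(s)/\mathrm{sn}_c(t)\bigr)^{m-1}\mathcal{H}^{m-1}(S_t(p))$; a genuine bi-Lipschitz control with model constants is false in general (angles can open up arbitrarily as one moves toward $p$), so you should drop that phrasing. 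Second, the exact coarea identity $\mathrm{Vol}(B_R(p))-\mathrm{Vol}(B_r(p)) = \int_r^R \mathcal{H}^{m-1}(S_s(p))\,ds$ is not just the Eilenberg inequality; it uses the rectifiability of finite-dimensional Alexandrov spaces and the fact that $|\nabla d_p|=1$ almost everywhere, so this is a nontrivial imported ingredient rather than a routine verification. With those two points made precise, your outline matches the standard published proof of the theorem.
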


\subsection{Miscelaneus results}

\begin{lem}\label{Shortgene}
(\cite{PG}, Proposition 5.28) Let $Z$ be a compact semilocally simply connected length space, $z_0 \in Z$, $\eta >0$, and $r = \sup _{ z \in Z} d(z,z_0)$. Then $\pi_1(Z,z_0)$ is generated by the loops of length $\leq 2r+\eta$.
\end{lem}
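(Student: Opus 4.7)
The plan is to decompose any loop at $z_0$ into short sub-arcs, and then use shortest paths from $z_0$ to the endpoints of these sub-arcs as ``leashes'' to turn each piece into a based loop of length at most $2r + \eta$.

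First I would use compactness together with semilocal simple connectedness to extract a uniform scale. Specifically, a Lebesgue number argument applied to a finite subcover by semilocally simply connected neighborhoods yields a $\delta \in (0, \eta)$ such that for every $z \in Z$ the ball $B_\delta(z)$ sits inside a path-connected open set $U_z$ with $\pi_1(U_z) \to \pi_1(Z)$ trivial. Since $Z$ is a compact length space, it is geodesic, so every point admits a shortest path from $z_0$ of length at most $r$.

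Given a loop $\gamma : [0,1] \to Z$ at $z_0$, I would choose a partition $0 = t_0 < \cdots < t_k = 1$ fine enough that each sub-arc $\gamma([t_{i-1}, t_i])$ has diameter less than $\delta$, and set $x_i := \gamma(t_i)$, so that $x_0 = x_k = z_0$. For each $i$, pick a shortest path $\sigma_i$ from $z_0$ to $x_i$ (with $\sigma_0$ and $\sigma_k$ constant), and pick a short arc $\alpha_i \subset U_{x_{i-1}}$ of length less than $\delta$ from $x_{i-1}$ to $x_i$. Semilocal simple connectedness then forces $\alpha_i$ and $\gamma|_{[t_{i-1}, t_i]}$ to be path-homotopic in $Z$.

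Setting $\ell_i := \sigma_{i-1} \cdot \alpha_i \cdot \bar\sigma_i$ produces loops at $z_0$ of length at most $2r + \delta < 2r + \eta$. The concatenation $\ell_1 \cdots \ell_k$ telescopes through the cancellations $\bar\sigma_i \cdot \sigma_i$ to a loop homotopic to $\alpha_1 \cdots \alpha_k$, which is itself homotopic to $\gamma$ by the previous paragraph. Therefore $[\gamma]$ is a product in $\pi_1(Z, z_0)$ of classes represented by loops of length at most $2r + \eta$, as required. The only mildly subtle step is the path-homotopy $\alpha_i \simeq \gamma|_{[t_{i-1}, t_i]}$: to deduce it from the hypothesis that $\pi_1(U_{x_{i-1}}) \to \pi_1(Z)$ is trivial, one must conjugate by a short path inside $U_{x_{i-1}}$ to move the basepoint, which is where path-connectedness of $U_z$ is used.
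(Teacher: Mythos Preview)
Your argument is correct and is the standard proof of this classical fact. Note, however, that the paper does not supply its own proof of this lemma: it is simply quoted from Gromov's book (\cite{PG}, Proposition~5.28), so there is nothing in the paper to compare your approach against. Your Lebesgue-number/telescoping argument is exactly the proof one finds in the literature, and the small care you take at the end---using path-connectedness of the small neighborhoods to convert the triviality of $\pi_1(U_{x_{i-1}})\to\pi_1(Z)$ into a path-homotopy between $\alpha_i$ and $\gamma\vert_{[t_{i-1},t_i]}$---is precisely the point that is sometimes glossed over. One tiny remark: you implicitly use that small metric balls in a length space are path-connected (so that $B_\delta(x_{i-1})$, and hence $U_{x_{i-1}}$, really contains both $\alpha_i$ and $\gamma\vert_{[t_{i-1},t_i]}$ as paths); this is immediate since any path of length $<\delta$ from $x_{i-1}$ stays in $B_\delta(x_{i-1})$.
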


We will also use a simple version of Gromov's systolic inequality.

\begin{thm}\label{Sys}
(\cite{FRM}, Section 1). Let $N$ be a  smooth closed aspherical manifold, and $g_n$ a sequence of Riemannian metrics on $N$ such that the volumes of the spaces $Z_n = (N, g_n)$ go to $0$ as $n \to \infty$. Then there is a sequence of noncontractible loops $\gamma_n : \mathbb{S}^1 \to Z_n$ with lengths going to $0$ as $n \to \infty$
\end{thm}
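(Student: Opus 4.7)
The plan is to derive the statement from Gromov's systolic inequality \cite{FRM}, which asserts that for a closed aspherical (more generally, essential) manifold $N^m$ there is a constant $C_m > 0$ depending only on $m$ such that for every Riemannian metric $g$,
$$ \operatorname{sys}(N, g)^m \leq C_m \cdot \operatorname{vol}(N, g), $$
where $\operatorname{sys}(N, g)$ denotes the length of a shortest noncontractible loop. Granting this inequality, the conclusion is immediate: if $\operatorname{vol}(Z_n) \to 0$, then $\operatorname{sys}(Z_n) \to 0$, and the infimum of lengths of noncontractible loops is always attained on a compact Riemannian manifold with nontrivial fundamental group (and a closed aspherical manifold of positive dimension cannot be simply connected, since otherwise it would be contractible), so we may take $\gamma_n$ to be a shortest noncontractible loop in $Z_n$.

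For the systolic inequality itself, I would follow Gromov's filling radius route. Embed $(N, g)$ isometrically into $L^\infty(N)$ via the Kuratowski map $x \mapsto d_g(x, \cdot)$, and define the filling radius $\operatorname{FillRad}(N, g)$ to be the infimum of $\varepsilon > 0$ such that the image of the fundamental class $[N]$ bounds a singular $(m+1)$-chain contained in the $\varepsilon$-neighborhood of $N$ inside $L^\infty(N)$. The systolic estimate then factors through two inequalities that compose to give the desired bound.

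The first inequality is $\operatorname{sys}(N, g) \leq 6 \operatorname{FillRad}(N, g)$ for every essential manifold. A closed aspherical manifold is itself a $K(\pi_1(N), 1)$, so its fundamental class is nonzero in group homology, and hence aspherical implies essential. The bound is proved by a coning argument: if the systole exceeded $6R$ with $R = \operatorname{FillRad}(N, g)$, then simplices of a sufficiently fine filling would have edges shorter than the systole, unique minimizing geodesics would allow one to inductively cone those simplices through a classifying map into $K(\pi_1(N), 1)$, and the resulting extension would produce a null-homology of $[N]$, contradicting essentiality. The second inequality is $\operatorname{FillRad}(N, g)^m \leq C_m' \operatorname{vol}(N, g)$, proved by constructing an explicit filling via stratified coning and controlling the mass added at each stage through an isoperimetric inequality valid in $L^\infty$.

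The main obstacle is the second, volume-controlled filling radius bound, which is the technical heart of \cite{FRM} and whose proof is substantially more delicate than the rest of the argument. The systole-to-filling-radius inequality and the final reduction from the systolic inequality to the statement of the theorem are comparatively soft; they require only standard facts from algebraic topology and the compactness of $N$.
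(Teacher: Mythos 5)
Your proposal is correct and follows essentially the same route as the paper, which simply invokes Gromov's systolic inequality for essential manifolds from \cite{FRM}; your reduction (aspherical implies essential, $\operatorname{sys}(Z_n)^m \leq C_m \operatorname{vol}(Z_n) \to 0$, and the systole is realized by a noncontractible closed geodesic) is exactly what the citation supplies. The filling-radius sketch of the inequality itself is accurate but, as in the paper, the heavy lifting is delegated to \cite{FRM}.
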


\section{Proof of Theorem \ref{main}}

Assume by contradiction, that there is a sequence $X_n = (M, g_n)$ as in Theorem \ref{main} converging to an interval $[0,L]$. Applied to the limit space $[0,L]$, Theorem \ref{SFibration} takes the following form. 

\begin{lem}\label{MFibration}
For any $\varepsilon >0$, and large enough $n (\varepsilon )$, there are continuous Gromov--Hausdorff approximations $f_n : X_n \to [0,L]$ such that $f_n^{-1}([\varepsilon , L- \varepsilon ])$ is homeomorphic to the product $[\varepsilon , L- \varepsilon ] \times F_n$, with $F_n$ an ANNCGS($m-1$), and  $f_n \vert_{f_n^{-1}([\varepsilon , L- \varepsilon ])}$ being the projection onto the first factor.
\end{lem}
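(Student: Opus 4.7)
The plan is to check that the hypotheses of Theorem \ref{SFibration} apply with $K = [\varepsilon, L-\varepsilon]$, and then to trivialize the resulting bundle using the contractibility of $K$. First I would observe that for every interior point $p \in (0,L)$ of the segment, there are exactly two unit-speed minimizing geodesics emanating from $p$, one in each direction, with angular distance $\pi$; hence $\Sigma_p[0,L] = \mathbb{S}^0$. Since the limit has dimension $\ell = 1$, this means that $d_{GH}(\Sigma_p[0,L], \mathbb{S}^{\ell-1}) = 0$ at every interior point, so $(0,L) \subset U_\delta([0,L])$ for every $\delta > 0$. In particular, for any fixed $\varepsilon > 0$ the compact set $K = [\varepsilon, L-\varepsilon]$ lies in $U_{\delta(m,-1)}([0,L])$, where $\delta(m,-1)$ is the constant provided by Theorem \ref{SFibration}.

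Next I would apply Theorem \ref{SFibration} directly with this $K$. This yields continuous Gromov--Hausdorff approximations $f_n : X_n \to [0,L]$ such that for all sufficiently large $n$, the restriction of $f_n$ to $f_n^{-1}(K)$ is a locally trivial fibration with fiber an ANNCGS$(m-1)$ manifold $F_n$ (well-defined up to homeomorphism because $K$ is connected).

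Finally I would trivialize the bundle. Since $K = [\varepsilon, L-\varepsilon]$ is contractible, any locally trivial fiber bundle over it is isomorphic to the trivial one, so there is a homeomorphism $f_n^{-1}(K) \cong K \times F_n$ under which $f_n$ corresponds to the projection onto the first factor. This gives exactly the statement of the lemma.

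There is essentially no obstacle here: the only content is matching the definition of $\delta$-regularity at interior points of an interval with Theorem \ref{SFibration}, and then using that an interval is contractible. The slight subtlety worth noting is that one must first invoke Theorem \ref{SFibration} for a fixed compact $K \subset U_\delta([0,L])$ (not for the whole of $U_\delta$), which is why the lemma is phrased in terms of the sub-interval $[\varepsilon, L-\varepsilon]$ rather than the open interval $(0,L)$.
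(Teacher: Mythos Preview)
Your proposal is correct and is essentially the same as the paper's own argument: the paper simply states that Lemma \ref{MFibration} is what Theorem \ref{SFibration} becomes when applied to $[0,L]$, and you have spelled out the two implicit steps (interior points of a segment have $\Sigma_p = \mathbb{S}^0$, and a bundle over a contractible base is trivial). There is nothing to add.
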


\subsection{2-dimensional case}

Proving Theorem \ref{main} for $m=2$ is easier and gives an idea on how to get the general case. Fix a small $\varepsilon  $ (say, $\varepsilon = L/100$) and use Lemma \ref{MFibration}. We see that the fibers $F_n$ are homeomorphic to $\mathbb{S}^1$ (the only compact 1-dimensional manifold), exhibiting $X_n$ as the connected sum of two surfaces $S_1 \# S_2$ (see Figure \ref{Cosum}). Since the 2-dimensional torus is undecomposable, one of the surfaces, say $S_1$, is homeomorphic to $\mathbb{S}^2$. This would imply that $Y_n := f_n ^{-1} ([0, L- \varepsilon])$ is homeomorphic to a disk, meaning that the inclusion $Y_n \to X_n$ is trivial at the level of fundamental groups. Therefore, when we take the universal covering $\tilde{X}_n \to X_n$, the preimage of $Y_n$ consists of disjoint copies of $Y_n$ (one for each element of $ \pi_1(X_n) =  \mathbb{Z}^2$).

\begin{figure}
\centering
\includegraphics[scale=1.14]{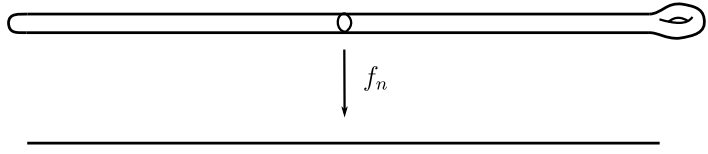}
\caption{The Fibration Theorem gives us a decomposition $X_n = S_1 \# S_2$}\label{Cosum}
\end{figure}

The sequence $X_n$ collapses to a lower dimensional object, so the volume of $X_n$ goes to $0$ as $n\to \infty$.  Since the torus is aspherical, by Theorem \ref{Sys} for any $C \in \mathbb{N}$, and large enough $n(C)$, there are non contractible loops $\gamma_n : \mathbb{S}^1 \to X_n$ of length $\leq L/C$. Since $Y_n$ is contractible and $\gamma_n$ is noncontractible, we have $\gamma_n (\mathbb{S}^1)   \backslash Y_n \neq \emptyset $. Let $x_n = \gamma_n (1)$, and $\tilde{x}_n $ one of its preimages in $\tilde{X}_n$. Since $\mathbb{Z}^2 = \pi_1 (X_n)$ has no torsion, there are at least $C/3$ elements of the orbit of $\tilde{x}_n$ in the ball $B_{L/2}(\tilde{x}_n)$.

Let $q_n \in f_n ^{-1}([0,\varepsilon ])$, and $\tilde{q}_n \in \tilde{X}_n$ its preimage closest to $\tilde{x}_n$. The ball $B_{L-3 \varepsilon}(\tilde{q}_n)$ is isometric to  $B_{L-3 \varepsilon}(q_n)$. However, the ball $B_{3L}(\tilde{q}_n)$ contains at least $C/3$ disjoint isometric copies of $B_{L-3 \varepsilon}(q_n)$.

By Theorem \ref{BGI}, applied to $Z= \tilde{X}_n$, $c=-1$, $p= \tilde{q}_n$, $r= L -3 \varepsilon$,  $R = 3L$, we get for any $q \in \mathbb{M}^m(-1)$,

$$  \dfrac{C}{3} \leq  \dfrac{Vol (B_{3L}(\tilde{q}_n)) }{Vol(B_{L-3\varepsilon }( \tilde{q}_n ))} \leq \dfrac{Vol(B_{3L}(q))}{Vol (B _{L-3 \varepsilon }(q))  }  .    $$

The right hand side only depends on $m, L, \varepsilon$, so the above inequality cannot hold if $C$ is large enough, which is a contradiction.

\subsection{General case.}

Fix a small $\varepsilon $ (to be chosen later) and use Lemma \ref{MFibration}. By Theorem \ref{YamBetti} we have that for large $n$, the image of the morphism $i_{\ast}: \pi_1 (F_n )\to \pi_1 (X_n)$ induced by the inclusion $i : F_n \to X_n$ has corank at least 1. Let $\tilde{X}_n$ be the cover of $X_n$ with Galois group $\Gamma_n := \pi_1 (X_n)/ i_{\ast}\pi_1(F_n)$.  Observe that by construction, the preimage of $ f_n^{-1}([\varepsilon , L- \varepsilon]) $ in $\tilde{X}_n$ consists of disjoint copies of itself.

Let $p_n$ be a point in $f_n^{-1}(L/2)$ and $\tilde{p}_n$ a lift in $\tilde{X}_n$. Let $S$ be the set of loops in $X_n$ based at $p_n$ of length $\leq L + 10 \varepsilon $. By Lemma \ref{Shortgene}, for large enough $n$, $S$ generates $\pi_1 (X_n, p_n)$. The elements of $S$ whose image is contained in $f_n^{-1}([\varepsilon , L- \varepsilon])$ are homotopic to elements of $i_{\ast} \pi_1(F_n)$ and lift to loops in $\tilde{X}_n$. Let $S^{\prime}$ be the subset of $S$ not homotopic to elements in $i_{\ast}\pi_1(F_n)$. $S^{\prime}$ generates $\Gamma_n$ and consists of loops that go to one of $f_{n}^{-1}([0, \varepsilon])$ or $f_n^{-1}([L- \varepsilon, L])$, but not both. We will call them Type I or Type II depending on whether they visit $f_{n}^{-1}([0, \varepsilon])$ or $f_n^{-1}([L- \varepsilon, L])$. 

First assume that there are no loops of Type I. This would mean that the inclusion
$$  j : f_n^{-1} (   [0, L- \varepsilon ]) \to X_n   $$
induces a map at the level of fundamental groups such that 
$$j_{\ast} (  \pi_1 (  f_n^{-1} (   [0, L- \varepsilon ])   )  ) \subset i_{\ast} (\pi_1 (F_n)).$$
This implies that the preimage of $ f_n^{-1}([0, L- \varepsilon ])$ in $\tilde{X}_n$ consists of infinitely many disjoint copies of itself. Also, since $\Gamma_n$ is abelian of positive rank, any set of generators contains an element of infinite order. Then there is a loop of Type II of infinite order and we can conclude identically as in the 2-dimensional case.

Now assume that there are two loops $\alpha$, $\beta$ of Type I not equivalent in $\Gamma_n$. This means that they lift as paths $\tilde{\alpha}$, $\tilde{\beta}$ in $\tilde{X}_n$ with startpoint $\tilde{p}_n$, but distinct endpoints $a_n$, $b_n$, respectively. Letting $q_n$ be an approximate midpoint of $a_n$ and $\tilde{p}_n$ in the image of $\tilde{\alpha}$ we see that 

\begin{center}
$d(\tilde{p}_n,a_n )  \approx d(\tilde{p}_n, b_n) \approx d(a_n, b_n) \approx L  $

 $ d(q_n,\tilde{p}_n) \approx d(q_n, a_n) \approx d(q_n , b_n) \approx L/2,  $
\end{center}
 
where the error in the above  approximations is of the order of $\varepsilon$. This violates the Alexandrov condition for the quadruple $(q_n ; \tilde{p}_n,a_n, b_n )$  if $\varepsilon (L)$ was chosen small enough (see Figure \ref{AlexD}).

\begin{figure}
\centering
\includegraphics[scale=0.6]{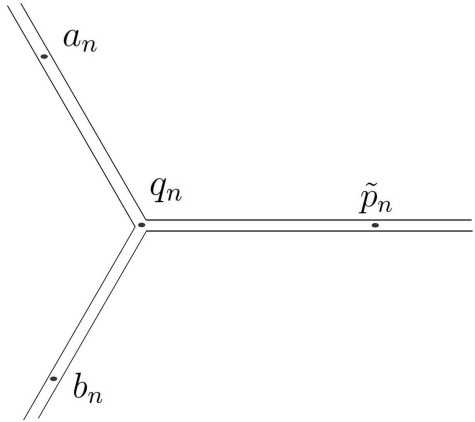}
\caption{The configuration $(q_n; \tilde{p}_n ,a_n, b_n) $ violates the Alexandrov condition}\label{AlexD}
\end{figure}

With this, we see that in $S^{\prime}$ there is exactly one loop of Type I and one loop of Type II modulo $i_{\ast}\pi_1(F_n)$. Observe that the inverse in $\Gamma_n$ of the loop of Type I is also a loop of Type I, but there is only one loop of Type I in $\Gamma_n$, so it is its own inverse, same for the loop of Type II. But $\Gamma_n$ is abelian of positive rank, so it cannot be generated by two elements of order 2.

%------------------------------------------------------------------------

\subsection*{Acknowledgment}

On behalf of all authors, the corresponding author states that there is no conflict of interest.

\end{document}